\definecolor{ablue}{rgb}{0.09,0.32,0.44} 
\newcommand{\Z}{{\mathbb Z}}
\newcommand{\R}{{\mathbb R}}
\newcommand{\EE}{\,{\cal E}}
\newcommand{\RR}{{\mathbb R}}
\newcommand{\vphi}{{\varphi}}
\newcommand{\one}{\mathbbm{1}}
\newcommand{\nottriag}{\not\!\!\Delta}
\newcommand{\abs}[1]{\left\vert #1 \right\vert}
\newcommand{\norm}[1]{\left\Vert #1 \right\Vert}
\newcommand{\parens}[1]{\left( #1 \right)}
\theoremstyle{plain}
\newtheorem{theorem}{Theorem}[section]
\newtheorem{corollary}[theorem]{Corollary}
\newtheorem{lemma}[theorem]{Lemma}
\theoremstyle{remark}
\theoremstyle{definition}
\newtheorem{definition}{Definition}[section]
\definecolor{red}{rgb}{.8,0,0}
\definecolor{green}{rgb}{0,.7,0}
\definecolor{blue}{rgb}{0,0,.8}
\DeclareMathOperator{\Ric}{Ric}
\DeclareMathOperator{\sgn}{sgn}
\DeclareMathOperator{\dist}{dist}
\def\lam{\lambda}
\title{Discrete curvature and abelian groups}
\author{Bo'az Klartag\thanks{Research supported in part by the
    European Research Council.}, Gady Kozma\thanks{Research supported
    in part by the Israel Science Foundation and the Jesselson Foundation.}, Peter Ralli\thanks{Research supported in part by the NSF grant DMS-1407657.} and Prasad Tetali\footnotemark[3]}
\date{August 27, 2015}
\begin{document}
\maketitle

\begin{abstract}
We study a natural discrete Bochner-type inequality on graphs, and explore its merit as a notion of ``curvature" in discrete spaces.
An appealing feature of this discrete version of the so-called
$\Gamma_2$-calculus (of Bakry-\'Emery) seems to be that it is fairly
straightforward to compute this notion of curvature parameter for
several specific graphs of interest -- particularly, abelian groups, slices of the hypercube, and the symmetric group under various sets of generators.
We further develop this notion by deriving  Buser-type inequalities (\`a la Ledoux), relating functional and isoperimetric constants associated with a graph.
Our derivations provide a tight  bound on the Cheeger constant (i.e., the {\em edge-isoperimetric constant})   in terms  of the spectral gap, for graphs with nonnegative curvature, particularly, the class of abelian Cayley graphs -- a result of independent interest.
\end{abstract}
\section{Introduction}
For several decades now it has been a fruitful endeavour to translate
notions from Riemannian geometry to graph theory. It is now clear what
are the graph analogs of the laplacian, Poincar\'e inequality, Harnack
inequality, and many related notions. The graph point of view led to
generalizations which would have been less natural in Riemannian
geometry, such as $\beta$-parabolic Harnack inequalities (see, e.g., \cite{BB04}), and to some counterexamples \cite{B04,D02,K14}.

Despite all this progress, the graph analog of the notion of curvature
remained elusive. In their 1985 paper, Bakry and \'Emery \cite{BE85}
suggested a notion analogous to curvature that would work in the very
general framework of a Markov semigroup (which, of course,
incorporates both continuous diffusions and random walks on
graphs). The condition was based on the Bochner formula
and was denoted by $CD(K,\infty)$ (for curvature-dimension) where $K$ is a curvature parameter. A semigroup satisfying $CD(K,\infty)$ is a generalization of Brownian
motion on a manifold with Ricci curvature $\ge K$ and hence the
condition $CD(K,\infty)$ is often called simply ``$\Ric\ge K$''
and we will stick to this convention in this paper. This notion as a possible definition of ``Ricci curvature" in Markov chains was in fact considered and discussed in \cite{Sch99} in 1999, but seems to have  largely been neglected ever since. For additional and more recent approaches to discrete Ricci curvature and related inequalities, see \cite{Bauer2013, EM12, GRST13, LV09, O09, Sam05, S06}. The fact that one can conclude from positive (or negative) curvature, a
local property, global facts about the manifold, has inspired similar
``local-to-global'' principles in group theory. See e.g.\ \cite{G73, P96}.

Beyond lower bounds on curvature, the proofs in \cite{BE85} (and in the recent book  \cite{BGL14}) rely on two additional assumptions on the semigroup. The
first was the existence of an appropriate algebra of smooth
functions. The second was a chain-rule formula for the
generator of the semigroup. A generator
satisfying the latter assumption is called a \emph{diffusion operator}, see
\cite[Definition 1.11.1, page 43]{BGL14}. In continuous setting it is
actually the existence of the required algebra of smooth functions
that is the most difficult condition to verify, but in graph settings this condition holds
immediately. Nevertheless, the diffusion condition can never hold in
the discrete setting.

However, the diffusion condition is not
always necessary. Denote the Cheeger constant (sometimes known as the
isoperimetric constant) by $h$, the spectral gap by $\lambda$ and
recall the inequality of Buser \cite{Buser82} that states that for a manifold with
non-negative Ricci curvature $\lambda \le 9 h^2$ (exact definitions will be given
in the next section). In 2006, the first two authors
noted that the arguments of Ledoux~\cite{Ledoux04spectralgap}, allow
to derive a discrete Buser-type inequality just assuming non-negative
Ricci curvature.

\begin{theorem}\label{thm:Buser}
A graph satisfying $\Ric\ge 0$ satisfies that
  $\lambda \le 16h^2$.
\end{theorem}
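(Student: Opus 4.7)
The strategy is to adapt Ledoux's continuous argument~\cite{Ledoux04spectralgap} to the graph setting. Let $P_t = e^{tL}$ be the continuous-time semigroup associated with the graph Laplacian $L$, with carr\'e du champ $\Gamma(f,g) = \frac{1}{2}(L(fg) - fLg - gLf)$ and iterated operator $\Gamma_2$. The hypothesis $\Ric \ge 0$ means $\Gamma_2(f,f) \ge 0$ pointwise, from which the standard one-line computation---differentiating $s \mapsto P_s \Gamma(P_{t-s}f, P_{t-s}f)$ and using $\Gamma_2 \ge 0$---yields the gradient contraction $\Gamma(P_t f, P_t f) \le P_t \Gamma(f, f)$.

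From here the argument splits into two halves. On the spectral side, the spectral theorem gives
\[
\|P_t f - f\|_2^2 \ge (1 - e^{-\lambda t})^2 \,\mathrm{Var}(f),
\]
since $x \mapsto (1 - e^{-x})^2$ is increasing and every non-zero eigenvalue of $-L$ is at least $\lambda$. On the geometric side, one wants to establish under $\Ric \ge 0$ the $L^1$ bound
\[
\|P_t f - f\|_1 \le C\sqrt{t}\, \int \sqrt{\Gamma(f,f)}\, d\mu.
\]
The derivation starts from $P_t f - f = \int_0^t LP_s f\, ds$, dualizes against an $L^\infty$ test function, and uses the gradient contraction together with the concavity bound $\int \sqrt{P_s g}\, d\mu \le \int \sqrt{g}\, d\mu$ (valid because $\mu$ is $P_s$-invariant and $\sqrt{\cdot}$ is concave) to control each $\|LP_s f\|_1$ by $\|\sqrt{\Gamma f}\|_1/\sqrt{s}$, after which the integral in $s$ produces the $\sqrt{t}$ factor. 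The delicate point---and the main obstacle---is that in the discrete setting $L$ fails to be a derivation, so this chain of manipulations must be carried out without invoking any chain rule; every identity of the form $L(\varphi(f)) = \varphi'(f)Lf + \varphi''(f)\Gamma(f,f)$ has to be replaced by an inequality that uses only $\Gamma_2 \ge 0$ and positivity of $P_s$.

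The two halves are glued together by the choice $f = \mathbf{1}_A - \mu(A)$ for a set $A$ with $\mu(A) \le 1/2$. Because $P_t \mathbf{1}_A$ takes values in $[0,1]$, we have $\|P_t f - f\|_\infty \le 1$ and hence $\|P_t f - f\|_2^2 \le \|P_t f - f\|_1$. For the indicator, $\Gamma(\mathbf{1}_A, \mathbf{1}_A)(x)$ counts (with the natural edge weights) the edges at $x$ that cross $\partial A$, so $\int \sqrt{\Gamma(\mathbf{1}_A, \mathbf{1}_A)}\, d\mu$ is comparable to the edge boundary $|\partial A|$. Combining the three estimates yields
\[
(1 - e^{-\lambda t})^2 \mu(A)(1 - \mu(A)) \le C\sqrt{t}\, |\partial A|,
\]
and choosing $t = 1/\lambda$ gives $|\partial A|/\mu(A) \ge c\sqrt{\lambda}$ for every such $A$, i.e.\ $h \ge c\sqrt{\lambda}$. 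A careful tracking of the numerical constants at each step is what produces the stated factor $16$.
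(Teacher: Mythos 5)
Your outline follows the same route as the paper (Ledoux's semigroup argument): the gradient contraction $\Gamma(P_tf)\le P_t\Gamma(f)$ from $\Gamma_2\ge 0$, an $L^1$ estimate $\norm{P_tf-f}_1\le 2\sqrt t\,\norm{\sqrt{\Gamma(f)}}_1$, and a spectral lower bound, glued by taking $f=\one_A$. However, the engine of the $L^1$ estimate is mis-identified. Writing $P_tf-f=\int_0^tP_s\triangle f\,ds$ and dualizing against $\psi=\sgn(P_s\triangle f)$ gives
$\norm{P_s\triangle f}_1=-\sum_x\Gamma(f,P_s\psi)(x)\le\norm{\sqrt{\Gamma(f)}}_1\cdot\max_x\sqrt{\Gamma(P_s\psi)(x)}$,
and what is then needed is the sup-norm bound $\max\sqrt{\Gamma(P_s\psi)}\le s^{-1/2}\max\abs{\psi}$, i.e.\ the reverse (local) Poincar\'e inequality $P_s(\psi^2)-(P_s\psi)^2\ge s\,\Gamma(P_s\psi)$. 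This is the step the paper derives from the gradient contraction by differentiating $r\mapsto P_r[(P_{s-r}\psi)^2]$, using only the product identity (\ref{eq:local1}) and no chain rule. The concavity bound $\sum\sqrt{P_sg}\le\sum\sqrt g$ that you invoke instead only controls $\norm{\sqrt{\Gamma(P_sf)}}_1$ by $\norm{\sqrt{\Gamma(f)}}_1$ with no gain in $s$; it cannot produce the $s^{-1/2}$ decay and belongs to a different part of Ledoux's argument (the Gaussian-type isoperimetric bound, Theorem~\ref{thm:iso_LSI} here). Without the reverse Poincar\'e inequality your proof of the key lemma does not close.

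A second, smaller issue concerns the constant. Your gluing $\norm{P_tf-f}_2^2\le\norm{P_tf-f}_\infty\norm{P_tf-f}_1\le\norm{P_tf-f}_1$ is valid but lossy: it produces a factor $(1-e^{-\lambda t})^2$, whereas the paper uses the exact identity $\tfrac12\norm{\one_A-P_t\one_A}_1=\norm{\one_A}_2^2-\norm{P_{t/2}\one_A}_2^2$ (valid since $\one_A^2=\one_A$, $0\le P_t\one_A\le 1$, and $P_t$ is self-adjoint and mass-preserving), yielding the unsquared $(1-e^{-\lambda t})$. Optimizing $t$ in your version gives roughly $h\ge 0.13\sqrt\lambda$, i.e.\ a bound closer to $\lambda\le 56h^2$; the stated $\lambda\le 16h^2$ (via $h\ge\tfrac14\sqrt\lambda$ at $t=1/\lambda$) requires the unsquared identity.
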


The graph version of Cheeger's inequality (e.g. \cite{ALS07, Ch10}), which does not require positive curvature, states that $\lambda \geq h^2 / (2d)$, where $d$ is the maximum degree of the graph. Thus for graphs with non-negative Ricci and bounded degree we get that $\lambda \approx h^2$.
As the
results from 2006 were never published, we include them in \S
\ref{sec:Buser}. A preprint of these results did circulate and a
number of papers built on it \cite{Bauer2013, LP14}. Particularly
relevant for us is the paper \cite{LP14} which shows that
the \emph{eigenvalues} of the laplacian on a graph with positive
curvature satisfy $\lambda_k\le Ck^2\lambda_1$.
In a similar spirit, we use the techniques of
\cite{Ledoux04spectralgap} to show a Gaussian type isoperimetric inequality
for graphs satisfying $\Ric\ge 0$ (see Section~\ref{iso-LS} below).

In light of Theorem~\ref{thm:Buser}, an intriguing and challenging open problem is to characterize the class of graphs with non-negative Ricci curvature.
The main new results of this paper are examples of such graphs which
satisfy $\Ric\ge 0$. These include Cayley graphs of abelian groups, the complete graph, the group $S_n$ with all transpositions, and slices of
the hypercube.


In particular, we get Buser's inequality for any Cayley graph of a
finite abelian group.
We remark that this is not true for a general
group. For example, the Cayley graph of the group $S_n$ with the
generators being $\{(12),(12\dotsc n)^{\pm 1}\}$ has $h$ of order $1/n^2$ and $\lambda \ge 1/n^3$, up to an absolute constant (we fill some details about these well-known facts in \S\ref{subsec:abelian}). This should be compared against the fact that
any compact Lie group has positive Ricci curvature, see
\cite[Corollary 3.19, page 65]{CE75}.

Note that our results above translate to $\lam(M) \le 16 d \ h^2(M)$
for a simple random walk $M$ on an abelian Cayley graph, regular of
degree $d$, with $h(M)$ and $\lam(M)$ being defined for the Markov
chain version. 
A result of the above type is also recently derived
independently by Erbar and by Oveis-Gharan and Trevisan (private communications). An earlier, weaker result, $\lam(M) = O(d^2 \ h^2(M))$  follows from the work in \cite{Bauer2013}, which uses a different notion of curvature (and a different argument of Ledoux), starting from a finite-dimensional curvature-dimension $CD(K, n)$ inequality for graphs.

Recently there have been several attempts to modify the $CD(K,n)$ criterion in order to allow certain results involving the heat equation \cite{Bauer2013, HLLY15, Munch15}.  A recent result of M\"unch \cite{Munch15} is that the $CDE'(K,n)$ criterion of \cite{HLLY15} implies the $CD(K,n)$ criterion of Bakry-\'{E}mery.  These criteria are often useful; for example, it is known that Ricci-flat graphs satisfy both the $CDE(0,\infty)$ criterion of \cite{Bauer2013} and the $CDE'(0,\infty)$ criterion.

In the remainder of this section, we introduce Bochner's $\Gamma_2$-type curvature for graphs along with various notations and definitions. In Section~\ref{sec:examples}, we bound the curvature for several examples, including slices of the discrete cube, symmetric group with adjacent as well as all transpositions as the generating sets;  
and nonnegativity of curvature for Cayley graphs of abelian groups. In Section~\ref{sec:gap}, we show that the spectral gap can be bounded from below by curvature.  In Section~\ref{sec:Buser},
we derive the above-mentioned Buser-type inequalities.

\subsection{Preliminaries}

We first recall some basic definitions and fairly standard notions.
Let $G=(V,E)$ be an undirected and locally finite graph.  Throughout, we will assume that $G$ has no isolated vertices.  The graph Laplacian $\Delta = \Delta(G)  = -(D(G) - A(G))$, where $D(G)$ is the diagonal matrix of the degrees of the vertices, and $A(G)$ is the adjacency matrix of $G$.  As an operator, its action on an $f:V\to \R$ can be described as:
\[ \Delta f(x) = \sum_{y\sim x} (f(y)-f(x))\,.\]
where here and below the notation $y\sim x$ means that $y$ is a
neighbour of $x$ in the graph. The sum is of course only over the
$y$. Note that $\Delta$ is a negative semi-definite matrix.

The spectral gap $\lambda(G)$ is the least non-zero eigenvalue of $-\Delta$.  We define the Cheeger constant \[h(G) = \min_{0 < |A| \leq |V|/2} \frac{|\partial A|}{|A|},\] where $|\partial A|$ denotes the number of edges from $A$ to $V-A$.

 Given functions $f, g: V\to \R$, we also define:
\[\Gamma(f,g)(x) = \frac{1}{2}\sum_{y\sim x} (f(x)-f(y))(g(x)-g(y))\,. \]
When $f=g$, the above becomes the more commonly denoted (square of the $l_2$-type) discrete gradient: for each $x\in V$,
\[\Gamma(f)(x):=\Gamma(f,f)(x) = \frac{1}{2}\sum_{y\sim x} (f(x)-f(y))^2 =:|\nabla f(x)|^2\,. \]

It becomes useful to define the iterated gradient
\[2 \Gamma_2(f,g) = \Delta\Gamma(f,g) - \Gamma(f,\Delta g) - \Gamma(\Delta f, g)\,.\]
By convention, \[\Gamma_2(f):=\Gamma_2(f,f) = \frac{1}{2}\Delta\Gamma(f) - \Gamma(f,\Delta f).\]
Note that, given a measure $\pi:V\to [0,\infty)$, one can consider the expectation (with respect to $\pi$) of the above quantity, which gives us the more familiar Dirichlet form associated with a graph:
\[\EE(f,g):= \frac{1}{2}\sum_x \sum_{y\sim x} (f(x)-f(y))(g(x)-g(y)) \pi(x)\,.\]
It is useful to note an identity: \begin{equation}\label{eq:global}\sum_{x\in V}\Gamma(f,g)(x) = -\sum_{x\in V}f(x)\Delta g(x) = -\sum_{x\in V}g(x)\Delta f(x).\end{equation}
An  additional useful local identity is:
\begin{equation}\label{eq:local1}
\triangle (f g) = f \triangle g + 2 \Gamma( f, g ) + g \triangle f,
\end{equation}

\noindent

\begin{definition}
The (Bochner) curvature $\text{Ric}(G)$ of a graph $G$ is defined as the maximum value $K$ so that, for any function $f$ and vertex $x$, we have
\begin{align}\label{bochsmall} \Gamma_2(f)(x) \geq K\Gamma(f)(x)\,.
\end{align}
\end{definition}

Let $x\in V$, and let $f:V\to \R$ be a function.  Observe that (\ref{bochsmall}) is unchanged on adding a constant to $f$, so we may assume that $f(x) = 0$.  We expand $\Gamma_2(f)(x)$:
\begin{align}
\lefteqn{2\Gamma_2(f)(x) = \Delta\Gamma(f)(x) - 2\Gamma(f,\Delta f)(x)}
\quad & \nonumber\\
& = \sum_{v\sim x} \Gamma(f)(v) - d(x)\Gamma(f)(x) - \sum_{v\sim x} f(v)\parens{\Delta f(v)-\Delta f(x)}
\nonumber \\
& = \frac{1}{2}\sum_{u\sim v\sim x} \parens{f(u)-f(v)}^2 - \frac{d(x)}{2}\sum_{v\sim x}f^2(v) + \sum_{v\sim x}f(v)\sum_{u\sim x} f(u) -\sum_{u\sim v\sim x} f(v) \parens{f(u)-f(v)}
\nonumber \\
& = \parens{\sum_{v\sim x}f(v)}^2 - \frac{d(x)}{2}\sum_{v\sim x}f^2(v) + \sum_{u\sim v\sim x}\frac{f^2(u) - 4f(u)f(v) + 3f^2(v)}{2}
\nonumber \\
& = \parens{\sum_{v\sim x}f(v)}^2 - \sum_{v\sim x}\frac{d(x) + d(v)}{2} f^2(v) + \frac{1}{2}\sum_{u\sim v\sim x} \parens{f(u)-2f(v)}^2\,.\label{eq:uvx}
\end{align}
Now, we break the latter term into the cases that $u = x$, $u\sim x$ and $d(x,u) = 2$.  In the second case, we denote by $\Delta(x,v,u)$ the set of all unordered pairs $(u,v)$ satisfying $x\sim u\sim v\sim x$.  The above is equal to
\begin{align}
2\Gamma_2(f)&=\frac{1}{2}\sum_{\substack{u\sim v\sim x\\ d(x,u) = 2}} \parens{f(u)-2f(v)}^2 + \parens{\sum_{v\sim x}f(v)}^2 + \sum_{v\sim x}\parens{2-\frac{d(x) + d(v)}{2}}f^2(v)
\nonumber\\
&\qquad +\;\sum_{\Delta(x,v,u)}\frac{\parens{f(v)-2f(u)}^2 + \parens{f(u)-2f(v)}^2}{2}
\nonumber \\
& = \frac{1}{2}\sum_{\substack{u\sim v\sim x\\ d(x,u) = 2}}\parens{f(u)-2f(v)}^2 + \parens{\sum_{v\sim x}f(v)}^2 + \sum_{v\sim x} \frac{4-d(x)-d(v)}{2}f^2(v)
\nonumber\\
&\qquad +\;\sum_{\Delta(x,v,u)}\biggl[2\parens{f(v)-f(u)}^2 + \frac{1}{2}\parens{f^2(v) + f^2 (u)}\biggr]\,. \label{boch}
\end{align}
Fixing $f(v)$ for all vertices $v\sim x$, we may ask what choice of $f(u)$ (for $d(x, u) = 2$)
minimizes the above expression?  We wish to minimize \begin{align*}\frac{1}{2}\sum_{\substack{v:\\ x\sim v\sim u}} \parens{f(u)-2f(v)}^2,\end{align*} it is simple to see that the minimizer is
\begin{align}\label{umin} f(u) = 2\cdot \frac{1}{r(u)} \sum_{x\sim v \sim u} f(v)\,,\end{align}
where $r(u)$ is the number of common neighbors of $u$ and $x$.

We first prove a general upper bound on the above notion of curvature, which will be used in the next section, to show tightness of our bounds on curvature for several example graphs.
\begin{theorem}\label{ricupper} Let $G = (V,E)$ be a graph.  If $e\in E$, let $t(e)$ denote the number of triangles containing $e$.  Define $T:=\max_{e} t(e)$.  Then $\Ric(G) \leq 2 + \frac{T}{2}$.\end{theorem}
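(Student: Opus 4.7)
My plan is to pick $x$ to be any vertex of minimum degree in $G$ and to test the indicator $f = \one_{N(x)}$, that is $f(x) = 0$ and $f(v) = 1$ for every $v \sim x$, extended to the vertices $u$ at distance $2$ from $x$ by the rule~(\ref{umin}). Since $f \equiv 1$ on every common neighbor of $x$ and $u$, the formula~(\ref{umin}) prescribes the uniform value $f(u) = (2/r(u)) \cdot r(u) = 2$ for each such $u$. With this $f$, one immediately has $2\Gamma(f)(x) = d(x)$.

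To compute $2\Gamma_2(f)(x)$ I would apply~(\ref{eq:uvx}) directly. The crucial feature of this test function is that for every $u$ at distance $2$ and every common neighbor $v$ of $u$ and $x$, one has $f(u) - 2f(v) = 2 - 2 = 0$, so the distance-$2$ vertices contribute nothing to the sum $\tfrac{1}{2}\sum_{u \sim v \sim x}(f(u) - 2f(v))^2$. Partitioning the remaining pairs by $u = x$ (contributing $\tfrac{1}{2}\cdot 4 d(x) = 2d(x)$) and $u \in N(x) \setminus\{x\}$ (each ordered pair of adjacent neighbours of $x$ contributing $\tfrac{1}{2}\cdot(1-2)^{2} = \tfrac{1}{2}$, for a total of $N_\Delta(x)$, where $N_\Delta(x)$ denotes the number of triangles containing $x$), I would obtain
\[
2\Gamma_2(f)(x) \;=\; \frac{d(x)^2}{2} \;-\; \frac{1}{2}\sum_{v \sim x} d(v) \;+\; 2d(x) \;+\; N_\Delta(x).
\]

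Dividing by $2\Gamma(f)(x) = d(x)$ yields
\[
\frac{\Gamma_2(f)(x)}{\Gamma(f)(x)} \;=\; \frac{d(x)}{2} \;-\; \frac{1}{2d(x)}\sum_{v \sim x} d(v) \;+\; 2 \;+\; \frac{N_\Delta(x)}{d(x)}.
\]
Two elementary estimates close the argument. First, because $x$ has minimum degree in $G$, every $v \sim x$ satisfies $d(v) \ge d(x)$, so $\sum_{v \sim x} d(v) \ge d(x)^{2}$, which makes the sum of the first two terms on the right-hand side at most $0$. Second, double-counting triangles at $x$ gives $\sum_{v \sim x} t(\{x, v\}) = 2 N_\Delta(x)$, and since each $t(\{x, v\}) \le T$ by hypothesis, $N_\Delta(x)/d(x) \le T/2$. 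Combining these two observations gives $\Gamma_2(f)(x)/\Gamma(f)(x) \le 2 + T/2$, proving the claimed upper bound. I do not foresee any serious obstacle: the only non-obvious move is the minimum-degree choice of $x$, which is exactly what is needed to cancel the otherwise-troublesome $d(x)/2$ term produced by $\Gamma_2$.
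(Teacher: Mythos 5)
Your proof is correct and is essentially the paper's own argument: the paper also tests the distance function $f(v)=\dist(v,x)$ at a minimum-degree vertex $x$ (which agrees with your $\one_{N(x)}$ extended by~(\ref{umin}) on the relevant ball of radius $2$), and closes with the same two estimates $\deg(v)\ge d(x)$ and $\sum_{v\sim x}t(\{x,v\})\le d(x)T$. The only cosmetic difference is that you expand from~(\ref{eq:uvx}) rather than~(\ref{boch}).
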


\begin{proof}   Let $x\in V$ be any vertex with the minimum degree
  $d$, and consider the distance (to $x$) function $f(v) = \text{dist}(v,x)$.
  It is simple to calculate that
\[2\Gamma_2(f)(x)
\stackrel{\textrm{(\ref{boch})}}{=}
d^2 + \sum_{v\sim x} \parens{2 - \frac{d + \deg(v)}{2}} + \sum_{\Delta(x,v,u)} 1 \leq 2d + \frac{dT}{2},\] observing that \[|\Delta(x,v,u)| = \frac{1}{2}\sum_{v\sim x}t(x,v)\leq \frac{dT}{2}\] and that $\Gamma(f)(x) = \tfrac{1}{2}d$.  Any value of $K > 2 + \frac{T}{2}$ will not satisfy (\ref{bochsmall}) for the function $f$ at vertex $x$, thus $\Ric(G) \leq 2 + \frac{T}{2}$.\end{proof}

\section{Examples}
\label{sec:examples}

In this section we provide bounds on the curvature for several graphs of general interest.

\subsection{The hypercube \texorpdfstring{$H_n$}{}}

Let $H_n$ represent the $n$-dimensional hypercube, where vertices are
adjacent if their Hamming distance is one. While the following result
also follows from the tensorization result of \cite{Sch99}, we provide here a direct proof.

\begin{theorem} $\Ric(H_n) = 2$ if $n\geq 1$. \end{theorem}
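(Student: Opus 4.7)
The plan is to combine the upper bound from Theorem \ref{ricupper} with a matching lower bound that uses formula (\ref{boch}) together with the minimization (\ref{umin}).

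For the upper bound, observe that $H_n$ is triangle-free, so $T = 0$ in the notation of Theorem \ref{ricupper}, and hence $\Ric(H_n) \leq 2$.

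For the lower bound, fix $x \in V(H_n)$, take $f(x) = 0$, and write $a_i = f(v_i)$ where $v_i$ is the neighbor of $x$ along direction $i \in \{1,\ldots,n\}$, and $b_{ij} = f(u_{ij})$ where $u_{ij}$ is the distance-$2$ vertex whose support is $\{i,j\}$. Since $H_n$ has no triangles, the $\Delta(x,v,u)$ sum in (\ref{boch}) is empty, and since $H_n$ is $n$-regular, the middle term gives coefficient $\frac{4-2n}{2} = 2-n$ on each $f^2(v)$. Thus (\ref{boch}) reduces to
\begin{align*}
2\Gamma_2(f)(x) = \frac{1}{2}\sum_{i \neq j} \parens{b_{ij} - 2a_i}^2 + \parens{\sum_i a_i}^2 + (2-n)\sum_i a_i^2,
\end{align*}
where I am using the fact that for each unordered pair $\{i,j\}$ the ordered-pair sum $\sum_{u \sim v \sim x,\; d(x,u)=2}$ contributes both $(b_{ij} - 2a_i)^2$ and $(b_{ij} - 2a_j)^2$. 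Meanwhile $2\Gamma(f)(x) = \sum_i a_i^2$, so the desired inequality $\Gamma_2(f)(x) \geq 2\Gamma(f)(x)$ is equivalent to
\begin{align*}
\tfrac{1}{2}\sum_{i \neq j}(b_{ij}-2a_i)^2 + \parens{\sum_i a_i}^2 \;\geq\; n\sum_i a_i^2.
\end{align*}

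The key step is now to observe that since each $b_{ij}$ only appears in one inner sum, I can minimize over $b_{ij}$ independently for each pair $\{i,j\}$: by (\ref{umin}), since $r(u_{ij}) = 2$, the minimizer is $b_{ij} = a_i + a_j$, yielding $(b_{ij}-2a_i)^2 + (b_{ij}-2a_j)^2 = 2(a_i - a_j)^2$. The remaining inequality is
\begin{align*}
\sum_{i<j}(a_i - a_j)^2 + \parens{\sum_i a_i}^2 \;\geq\; n\sum_i a_i^2,
\end{align*}
which holds with equality since $\sum_{i<j}(a_i - a_j)^2 = n\sum_i a_i^2 - (\sum_i a_i)^2$. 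Hence $\Ric(H_n) \geq 2$.

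The only non-routine step is identifying the correct minimizer for the $b_{ij}$ variables; once (\ref{umin}) is applied the remaining identity is just a one-line expansion. I do not anticipate any real obstacle, though a careful bookkeeping of ordered versus unordered pairs in the $u \sim v \sim x$ sum is needed to get the combinatorial factors right.
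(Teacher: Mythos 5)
Your proof is correct and follows essentially the same route as the paper's: triangle-freeness plus Theorem \ref{ricupper} for the upper bound, and for the lower bound the substitution of the minimizer $f(u_{ij}) = a_i + a_j$ from (\ref{umin}) into (\ref{boch}), reducing to the identity $\sum_{i<j}(a_i-a_j)^2 = n\sum_i a_i^2 - \left(\sum_i a_i\right)^2$. The bookkeeping of ordered versus unordered pairs is handled correctly, so nothing further is needed.
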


\begin{proof}
For any vertex $x \in H_n$, and for any $f$ with $f(x)=0$, we get from (\ref{boch})
\begin{align*}2\Gamma_2(f)(x) = \frac{1}{2}\sum_{\substack{u:\\ d(x,u) = 2}}\sum_{\substack{v:\\ x\sim v\sim u}}\parens{f(u)-2f(v)}^2 + \parens{\sum_{v\sim x}f(v)}^2 + (2-n) \sum_{v\sim x} f^2(v).\end{align*}

Let $u$ be a vertex of distance $2$ from $x$, and let $v$ and $w$ be the two distinct vertices so that $u\sim v\sim x\sim w\sim u$.  Then for fixed values of $f(v)$ where $v\sim x$, according to (\ref{umin}) $\Gamma_2(f)(x)$ is minimized by $f(u) = f(v) + f(w)$.  With this value, \[\sum_{v:u\sim v\sim x} \parens{f(u)-2f(v)}^2 = 2\parens{f(v)-f(w)}^2.\]
As for every pair $v,w\sim x$ there is a unique vertex $u$ with $u\sim
v,w$ and $d(x,u) = 2$,
\begin{align*} 2\Gamma_2(f)(x)
 \geq & \sum_{\substack{v\neq w\\ v,w\sim x}} \parens{f(v)-f(w)}^2 +
 \parens{\sum_{v\sim x}f(v)}^2 + (2-n) \sum_{v\sim x} f^2(v)\,,
\end{align*} where the first sum is over all unordered pairs $(v,w)$ of distinct neighbors of $x$.  We use this convention throughout the paper.
Expanding the above gives
\begin{align*}
&\sum_{\substack{v\neq w\\ v,w\sim x}} (f^2(v) + f^2(w)) - \sum_{\substack{v\neq w\\ v,w\sim x}} 2f(v)f(w) +  \sum_{v\sim x} f^2(v) + \sum_{\substack{v\neq w\\ v,w\sim x}} 2f(v)f(w) + (2-n)\sum_{v\sim x} f^2(v) & \\
  & =  2\sum_{v\sim x} f^2(v) = 4\Gamma(f)(x)\,.
\end{align*}
So $\text{Ric}\geq 2$, and by Theorem~\ref{ricupper} we may conclude that $\Ric = 2$.
\end{proof}

 \

In the following, we compute the curvature of the complete graph. With
the tensorization result of \cite{LP14}, this provide another proof of the fact that the hypercube has curvature 2.

 \subsection{The complete graph \texorpdfstring{$K_n$}{}}

 \begin{theorem}
 $\Ric(K_n) = 1 + \frac{n}{2}$ if $n\geq 2$.
 \end{theorem}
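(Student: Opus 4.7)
The upper bound is essentially immediate from Theorem~\ref{ricupper}. In $K_n$, every edge is contained in exactly $n-2$ triangles (the common neighbor can be any of the other $n-2$ vertices), so $T = n-2$ and hence $\Ric(K_n) \le 2 + (n-2)/2 = 1 + n/2$.

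The work is in the matching lower bound: for every vertex $x$ and every $f$ with $f(x) = 0$, I need to show $2\Gamma_2(f)(x) \ge (n+2)\,\Gamma(f)(x)$. I would start directly from the decomposition \eqref{boch}. The first simplification is that $K_n$ has diameter $1$: there are no vertices $u$ with $d(x,u) = 2$, so the ``$d(x,u)=2$'' sum vanishes entirely. Moreover, every unordered pair $\{u,v\}$ of neighbors of $x$ forms a triangle through $x$, so $\Delta(x,v,u)$ ranges over \emph{all} such pairs. Finally, $d(x) = d(v) = n-1$ throughout, so the coefficient $(4-d(x)-d(v))/2$ in \eqref{boch} is just $3-n$.

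The plan is then to introduce the two symmetric functions $S := \sum_{v\sim x} f(v)$ and $Q := \sum_{v\sim x} f^2(v) = 2\Gamma(f)(x)$, and to reduce $2\Gamma_2(f)(x)$ to a linear combination of $S^2$ and $Q$. Two standard identities over the $n-1$ neighbors of $x$ will do the bookkeeping:
\[
\sum_{\{u,v\}\subset N(x)} \bigl(f^2(u)+f^2(v)\bigr) = (n-2)\,Q, \qquad \sum_{\{u,v\}\subset N(x)} \bigl(f(u)-f(v)\bigr)^2 = (n-1)\,Q - S^2.
\]
Substituting these into \eqref{boch} and collecting terms should give an expression of the form $2\Gamma_2(f)(x) = \tfrac{3n}{2}\,Q - S^2$.

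The final step is Cauchy--Schwarz applied to $S = \sum_{v\sim x} f(v)$ on $n-1$ summands, which gives $S^2 \le (n-1)\,Q$. Plugging this in yields $2\Gamma_2(f)(x) \ge \bigl(\tfrac{3n}{2} - (n-1)\bigr) Q = (n/2 + 1)\,Q = (n+2)\,\Gamma(f)(x)$, which is the desired bound. Equality holds when $f$ is constant on $N(x)$, matching the upper bound and confirming sharpness. There is no real obstacle here; the only thing to be careful about is the combinatorial bookkeeping of which pairs $\{u,v\}$ appear in which sums, since the absence of distance-$2$ vertices changes the structure of \eqref{boch} considerably compared to a sparse graph.
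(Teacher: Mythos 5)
Your proposal is correct and follows essentially the same route as the paper: both start from the decomposition \eqref{boch}, observe that $K_n$ has no distance-$2$ vertices and that every pair of neighbors forms a triangle, reduce $2\Gamma_2(f)(x)$ to $\tfrac{3n}{2}\sum_{v\sim x}f^2(v) - \bigl(\sum_{v\sim x}f(v)\bigr)^2$, and finish with Cauchy--Schwarz, with the upper bound coming from Theorem~\ref{ricupper} via $T=n-2$. The only difference is cosmetic bookkeeping (your symmetric-function identities versus the paper's term-by-term expansion).
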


\begin{proof}
For the complete graph on $n$ vertices, we have, for every $x\in V$
and every $f:V\to\RR$ such that $f(x)=0$, from (\ref{boch}),
\begin{multline*}
2\Gamma_2(f)(x) = \\
\Big(\sum_{v\sim x}f(v)\Big)^2
  + (3-n)\sum_{v\sim x} f^2(v)
  + \sum_{\substack{u,v\sim x\\ u\neq v}}\Big(2\parens{f(v)-f(u)}^2
      + \frac{1}{2}\parens{f(u)^2 + f(v)^2}\Big).
\end{multline*}
Expanding the above gives
\begin{align*}
\sum_{v\sim x} f^2(v)
  &+ \sum_{\substack{u,v\sim x\\ u\neq v}} 2f(u)f(v) + (3-n)\sum_{v\sim x} f^2(v)
  + \frac{5}{2}\sum_{\substack{u,v\sim x\\ u\neq v}}(f^2(v) + f^2(u))
  - \sum_{\substack{u,v\sim x\\ u\neq v}}4f(u)f(v)\\
&= (4-n)\sum_{v\sim x} f^2(v) + \frac{5}{2}(n-2)\sum_{v\sim x} f^2(v)
  - 2\sum_{\substack{u,v\sim x\\ u\neq v}}f(u)f(v)\\
&= \parens{\frac{3n}{2}-1}\sum_{v\sim x}f^2(v)
  - 2\sum_{\substack{u,v\sim x\\ u\neq v}}f(u)f(v) = \frac{3n}{2}\sum_{v\sim x}f^2(v)
  - \Big(\sum_{v\sim x}f(v)\Big)^2\,.
 \end{align*}

By the Cauchy-Schwarz inequality, $\parens{\sum_{v\sim x}f(v)}^2 \leq \abs{\set{v:v\sim x}}\sum_{v\sim x}f^2(v) = (n-1)\sum_{v\sim x}f^2(v)$, so
\begin{align*}\frac{3n}{2}\sum_{v\sim x}f^2(v) - \parens{\sum_{v\sim x}f(v)}^2 \geq \parens{1+\frac{n}{2}}\sum_{v\sim x}f^2(v)\,.\end{align*}
Thus $\text{Ric}\geq 1 + \frac{n}{2}$, once again by Theorem~\ref{ricupper}, we conclude that $\Ric = 1 + \frac{n}{2}$.
\end{proof}

\subsection{Finite abelian Cayley graphs}\label{subsec:abelian}

A finite abelian group is of course a product of cyclic groups and hence one
might think that the curvature of the graph can be deduced from the
tensorization result of \cite{LP14}. However, a Cayley graph is determined by an underlying group and a generating set for that group. Here we show that a finitely generated abelian
group with \emph{any} set of generators has positive Ricci
curvature - not only with the generating set inherited from a
decomposition into cyclic groups. This result was implicit in the
literature, since abelian Cayley graphs are ``Ricci flat'' \cite{CY86},
and this property, in turn, gives $\Ric\ge 0$ \cite{LY}. We give here a
direct proof.

Let us remark that the problem of graphs locally identical
to an abelian group has also been attacked successfully using
combinatorial tools. See \cite{BE} and references within.

\begin{theorem}\label{cayley} Let $X$ be a finitely generated abelian group, and $S$ a finite set of generators for $X$.  Let $G$ be the Cayley graph corresponding to $X$ and $S$.  Then $\Ric(G)\geq 0$.\end{theorem}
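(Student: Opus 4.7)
The plan is to exploit the vertex transitivity of Cayley graphs, the commutativity of $X$, and a single elementary quadratic inequality. By vertex transitivity, it suffices to verify $\Gamma_2(f)(x)\ge 0$ at the identity element $e\in X$; and since (\ref{bochsmall}) is invariant under adding a constant to $f$, I may assume $f(e)=0$. Write $a_s:=f(s)$ for $s\in S$, and note that $G$ is $d$-regular with $d=|S|$. Specializing the identity (\ref{eq:uvx}) to this setting turns the target quantity into
$$2\Gamma_2(f)(e)\;=\;\Bigl(\sum_{s\in S}a_s\Bigr)^2-d\sum_{s\in S}a_s^2+\tfrac{1}{2}\sum_{s,t\in S}\bigl(f(s+t)-2a_s\bigr)^2,$$
so the whole task reduces to bounding the double sum below.

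This is where commutativity enters: since $s+t=t+s$ in $X$, the two ordered pairs $(s,t)$ and $(t,s)$ in the double sum involve the same value $X:=f(s+t)$ but differ in the linear terms $2a_s$ versus $2a_t$. Pairing them up, I would apply the elementary inequality
$$(X-2a_s)^2+(X-2a_t)^2\;\ge\;2(a_s-a_t)^2,$$
which is just the observation that the left-hand quadratic in $X$ is minimized at $X=a_s+a_t$. Crucially, this inequality holds for \emph{every} real $X$, so the boundary case $s+t=0$ (in which $X=f(e)=0$ is forced) needs no separate treatment. The diagonal contributions $s=t$ are non-negative and can simply be discarded.

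Summing the pairwise bounds over all ordered pairs gives
$$\sum_{s,t\in S}\bigl(f(s+t)-2a_s\bigr)^2\;\ge\;\sum_{s,t\in S}(a_s-a_t)^2\;=\;2d\sum_{s}a_s^2-2\Bigl(\sum_{s}a_s\Bigr)^2,$$
after expanding the square. Substituting back, the remaining two terms in the expression for $2\Gamma_2(f)(e)$ cancel exactly, leaving $2\Gamma_2(f)(e)\ge 0$, i.e.\ $\Ric(G)\ge 0$. I do not anticipate a serious obstacle; the one delicacy is noticing up front that this pairing collapses the whole calculation uniformly, so that no case analysis (for $t=-s$, or for generators of order two) is actually required.
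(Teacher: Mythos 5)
Your proof is correct and takes essentially the same route as the paper's: both start from the identity (\ref{eq:uvx}) specialized to the regular Cayley graph, use commutativity to pair the ordered pairs $(s,t)$ and $(t,s)$ that share the value $f(s+t)$, and minimize the resulting quadratic in $f(s+t)$ --- your inequality $(X-2a_s)^2+(X-2a_t)^2\ge 2(a_s-a_t)^2$ is exactly the paper's elementary bound $a^2-2ab\ge -b^2$ applied with $b=a_s+a_t$. The only cosmetic difference is that you keep the squares intact and discard the diagonal terms $s=t$ as non-negative, while the paper expands them and bounds the diagonal contribution via $a^2/2-2ab\ge -2b^2$.
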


Recall that the Cayley graph of a group $G$ with respect to a given
set $S$ which generates $G$ is the graph whose vertices are the
elements of $G$ and whose edges are $\{(g,gs)\}_{g\in G, s\in
  S}$. Since we are interested in undirected graphs, $S$ should be
symmetric i.e.\ $s\in S\Rightarrow s^{-1}\in S$.
\begin{proof}Without loss of generality, we may set $x$ to be the
  identity element of $X$.  Denote the degree of every vertex by
  $d$. As usual, let $f:G\to\RR$ with $f(x)=0$.

For this calculation, we prefer not to distinguish between $u$
according to their distance from $x$ so we start the calculation from
(\ref{eq:uvx}) and using the constant degree get
\begin{align}\label{bochother}
2\Gamma_2(f)(x) = d\sum_{v\sim x} f^2(v)
  + \Big(\sum_{v\sim x} f(v)\Big)^2
  + \sum_{v\sim x} \sum_{u\sim v} \parens{\frac{f^2(u)}{2} - 2f(u)f(v)}.
\end{align}
Because $x$ is the identity, we observe that if $u\sim v\sim x$, there is a unique $w\sim x$ so that $u = vw$.  We can express the last term of (\ref{bochother}) as
\begin{align*}&\sum_{v\sim x} \sum_{u\sim v} \parens{\frac{f^2(u)}{2} - 2f(u)f(v)} = \sum_{v\sim x}\sum_{w\sim x}\parens{ \frac{f^2(vw)}{2} - 2f(vw)f(v)}\\
&= \sum_{v\sim x} \parens{\frac{f^2(v^2)}{2}-2f(v^2)f(v)} + \sum_{\substack{v,w\sim x\\ v\neq w}}\parens{ f^2(vw) - 2f(vw)\parens{f(v) + f(w)}} \\
&\geq -2\sum_{v\sim x}f^2(v) - \sum_{\substack{v,w\sim x\\ v\neq w}} \parens{f(v) + f(w)}^2 = (-d-1)\sum_{v\sim x}f^2(v) - 2\sum_{\substack{v,w\sim x\\ v\neq w}} f(v)f(w).\end{align*}
In the last passage we used the elementary inequalities $a^2/2 - 2ab \geq -2b^2$ and $a^2 - 2ab \geq -b^2$

Plugging this bound into (\ref{bochother}), we find that
\begin{align*} 2\Gamma_2(f)(x)\geq  \parens{\sum_{v\sim x} f(v)}^2 -\sum_{v\sim x} f^2(v) - 2\sum_{\substack{v,w\sim x\\ v\neq w}} f(v)f(w) = 0.
\end{align*}
This completes the proof.
\end{proof}

Now, the assumption that the group is abelian is necessary.
An infinite example demonstrating this is the $d$-ary tree, which is the Cayley graph of the group $\langle s_1,...,s_d:s_i^2 = id\text{ for }i = 1,...,d\rangle$ with the generating set $s_1,...,s_d$. This graph has $\Ric = 2-d$, which is achieved whenever $\sum_{y\sim x} f(y) = 0$ and $f(z) = 2f(y)$ whenever $z\sim y\sim x$.  This is optimal; it is not difficult to see that no $d$-regular graph has $\Ric(G) < 2-d$.

 A little more surprising, perhaps, is that the Heisenberg group also has negative curvature. We mean here the group of upper triangular matrices with 1 on the diagonal and \emph{integer entries}, equipped with the set of generators $\left\{\left(\begin{smallmatrix} 1 & \pm 1 & 0 \\
& \hphantom{\pm}1 & 0\\
&&1\end{smallmatrix}\right),
\left(\begin{smallmatrix} 1 & 0 & \hphantom{\pm}0 \\
& 1 & \pm 1\\
&&\hphantom{\pm}1\end{smallmatrix}\right)\right\}$. It is straightforward to check that these generators do not satisfy any relation of length 4, so the environment within distance 2 (which is the only relevant distance for calculation of the curvature) is tree-like, and the curvature would be $-2$.

Switching to finite Cayley graphs, it is well-known that there exist finite Cayley graphs which are locally tree-like, and hence would have negative curvature. What is perhaps more interesting is that even
Buser's inequality (the conclusion of
Theorem \ref{thm:Buser1}) may fail.

\begin{theorem}For the group $S_n$ and the (left) Cayley graph generated by
  $\{(12),(12\dotsc n)^{\pm 1}\}$, the Cheeger constant is $\le c_1
  n^{-2}$, while the spectral gap is $\ge c_2n^{-3}$, with $c_1, c_2 > 0$, independent of $n$.
\end{theorem}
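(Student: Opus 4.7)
I would prove the two estimates separately.

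\textbf{Spectral gap lower bound} $\lambda \ge c_2 n^{-3}$.
The plan is a Diaconis--Saloff-Coste (DSC) comparison against the random-transposition Cayley graph $\widetilde G$ on $S_n$, whose spectral gap is $\widetilde\lambda = \Theta(n^{-1})$ by Diaconis--Shahshahani. Every transposition $(a,b) \in S_n$ admits a word of length $O(n)$ in our three generators: for instance $(1,n) = \tau^{-1}(12)\tau$ has length three, and a general $(a,b)$ is built from $(1, b-a+1)$ by conjugation by $\tau^{a-1}$, together with an $O(n)$-long construction of $(1,k)$ combining rotations with $(12)$. Taking these as canonical paths in $G$, vertex transitivity equalises the congestion across every edge of $G$, and a direct count bounds the congestion by $O(n^2)$. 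Combined with the factor $\binom{n}{2}/3$ coming from the ratio of edge weights in the DSC comparison, this yields $\lambda(G) \ge c_2 n^{-3}$.

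\textbf{Cheeger upper bound} $h \le c_1 n^{-2}$.
Here the plan is to exhibit a $\tau$-invariant potential that is Lipschitz under $(12)$, and slice by its level sets. Let $d_c$ denote cyclic (graph) distance on $\Z/n\Z$, and define
\[
\psi(\sigma) \;=\; \min_{k \in \Z/n\Z} \sum_{j=1}^{n} d_c\bigl(j - k,\,\sigma(j)\bigr).
\]
A direct computation then gives
\[
\psi(\sigma\,\tau^{\pm 1}) \;=\; \psi(\sigma), \qquad \bigl|\psi(\sigma\,(12)) - \psi(\sigma)\bigr| \;\le\; 2:
\]
shifting the auxiliary $k$ absorbs the rotation, while under $(12)$ only the summands at $j = 1, 2$ change, each by at most $1$ by the triangle inequality on $\Z/n\Z$. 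Moreover, the range of $\psi$ has size $\Theta(n^2)$, since $\psi(\mathrm{id}) = 0$ and $\psi$ takes values of order $n^2$ on a typical permutation.

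Set $A_t = \{\sigma : \psi(\sigma) \le t\}$. Because $\psi$ is $\tau$-invariant, only $(12)$-edges can cross $\partial A_t$, and each such edge belongs to at most two boundary sets, so
\[
\sum_{t \ge 0} |\partial A_t| \;\le\; 2 \cdot (n!/2) \;=\; n!.
\]
Averaging over the $\Theta(n^2)$ distinct values of $t$ produces some level with $|\partial A_t| = O(n!/n^2)$. A concentration estimate on $\psi(\sigma)$ for uniformly random $\sigma$---showing that $\Omega(n^2)$ of those levels carry middle-range mass, $|A_t| \in [n!/3,\,2n!/3]$---then upgrades this via pigeonhole to a single level $t^*$ with both $|A_{t^*}|$ in the middle range and $|\partial A_{t^*}| = O(n!/n^2)$, giving $h(G) \le c_1 n^{-2}$.

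The hardest step I anticipate is the concentration claim for $\psi$: showing that it occupies $\Omega(n^2)$ middle-mass levels amounts to a second-moment calculation for a sum of cyclic displacements under a random permutation, which I expect to require care; once in hand, the averaging argument is routine.
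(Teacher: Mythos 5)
Your spectral-gap half is essentially the route the paper takes: it cites the Diaconis--Saloff-Coste comparison with the random-transposition chain (which gives $(1/10)n^{-3}$) and only adds a self-contained coupling argument yielding the slightly weaker $1/(n^3\log n)$; your sketch of the canonical-path bookkeeping is loose but the conclusion is standard and correct. The problem is in the Cheeger half, where your key anti-concentration claim is false. You need $\Omega(n^2)$ levels $t$ carrying middle-range mass, but $\psi$ cannot be that spread out: for each fixed $k$ the quantity $\sum_j d_c(j-k,\sigma(j))$ is a sum of $n$ terms, each of size $O(n)$ and pairwise nearly independent under a uniform $\sigma$, so its variance is $\Theta(n^3)$ and its standard deviation is $\Theta(n^{3/2})$; taking the minimum over the $n$ shifts $k$ does not change this order. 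Hence all levels with $|A_t|\in[n!/3,\,2n!/3]$ lie in an interval of length $O(n^{3/2})$, and your averaging can only produce a level with $|\partial A_{t^*}|=O(n!/n^{3/2})$, i.e.\ $h=O(n^{-3/2})$, which falls short of the claimed $n^{-2}$.

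The paper instead takes the single set $A=\{\phi:\dist(\phi(1),\phi(2))\le \tfrac14 n\}$ (cyclic distance), which exploits a second factor of $1/n$ coming not from anti-concentration but from locality of the generator: this statistic is invariant under $(12\dotsc n)^{\pm1}$, and left multiplication by $(12)$ changes it at all only when one of $\phi(1),\phi(2)$ lies in $\{1,2\}$ --- an event of probability $O(1/n)$ --- while membership in $\partial A$ additionally forces the statistic to sit exactly at the threshold $\lfloor n/4\rfloor$, another factor of $O(1/n)$. Your potential typically changes by $1$ or $2$ on \emph{every} $(12)$-edge, so it has no analogue of the first saving. If you want to keep a co-area argument, run it on the statistic $\dist(\phi(1),\phi(2))$ itself: only $O(n!/n)$ of the $(12)$-edges see it change, it is $2$-Lipschitz there, and it is spread roughly uniformly over $\Theta(n)$ levels, which recovers a level set with boundary $O(n!/n^2)$ and hence $h=O(n^{-2})$.
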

\begin{proof}[Proof sketch] To show an upper bound on  the Cheeger constant, we consider
  the following set:
\[
A = \{\phi\in S_n:\dist(\phi(1),\phi(2))\le \tfrac14 n\}
\]
(there is no connection between the 1 and 2 in the definition of $A$
and the fact that we took $(12)$ as a generator). Here dist is the
cyclic distance between two numbers in $\{1,\dotsc,n\}$
i.e.\ $\min(|x-y|,n-|x-y|)$. Clearly $|A|=(\frac12 +o(1))n!$. To
calculate the size of the boundary we first note that the generators
$(12\dotsc n)^{\pm 1}$ keep $A$ invariant, so the boundary of $A$ is
composed of edges between $\phi\in A$ and $(12)\phi\not\in A$. This
makes two requirements on $\phi$: first it must satisfy that
$\dist(\phi(1),\phi(2))=\lfloor \frac14 n\rfloor$, and second it must
satisfy that one of $\phi(1),\phi(2)$ is in the set $\{1,2\}$
otherwise the application of $(12)$ does nothing to $\phi(1)$ and
$\phi(2)$ and $(12)\phi$ would still be in $A$. Thus $\partial A
\approx n!/n^2$ and $h\ge c/n^2$ (this argument gives $c=2+o(1)$).

The estimate of the spectral gap (from below) for the random walk on this Cayley graph was done by Diaconis and Saloff-Coste (see Section 5.3 in \cite{DSC95}), as an example of the comparison argument -- comparing with the random transposition chain, which has a spectral gap of order $1/n$, gives a lower bound of $(1/10)n^{-3}$ for this chain; since the graph has a bounded degree, the spectral gap of the graph laplacian is only a constant factor off that of the random walk on the graph.

For the convenience of the reader, and for completeness, we now sketch a proof of a lower bound of $1/(n^3 \log n)$, which serves to justify the point of the theorem. We construct a
\emph{coupling} between two lazy random
walkers on our group $S_n$ that succeeds by time $n^3\log n$. It is
well-known (see e.g.\ \cite{LPW09}) that this bounds the mixing time,
and hence the relaxation time, which is the inverse of the spectral
gap. The coupling is as follows: assume $\phi_n$ and $\psi_n$ are our
two walkers. We apply exactly the same random walks steps to $\phi_n$
except in one case: when for some $i$ $\phi_n(i)=1$ and $\psi_n(i)=2$.
In this case when we apply a $(12)$ step for $\phi_n$ we apply a lazy
step to to $\psi_n$, and vice versa (the $(12\dotsc n)^{\pm 1}$ are
still applied together). It
is easy to check that for each $i$, $\phi_n(i)-\psi_n(i)$ is
doing a random walk on $\{1,\dotsc,n\}$, slowed down by a factor of
$n$, with gluing at $0$. Therefore it glues with positive probability
by time $n^3$ and with probability $>1-1/2n$ by time $Cn^3\log
n$. Thus by this time, with probability $>\frac 12$ we have
$\phi(i)=\psi(i)$ for all $i$, or in other words, the coupling
succeeded. This shows that the mixing time is $\le Cn^3\log n$ and in turn
gives a lower bound on the spectral gap.
\end{proof}


\subsection{Cycles and infinite path}

We consider the cycle $C_n$ for $n\geq 3$.  We extend the notation by letting $C_{\infty}$ denote the infinite path.

From previous results it is simple to observe that $\Ric(C_3) = \frac{5}{2}$, as $C_3 = K_3$, and that $\Ric(C_4) = 2$, because $C_4 = H_2$.
\begin{theorem}\label{cycles} If $n\geq 5$, $\Ric(C_n) = 0$.\end{theorem}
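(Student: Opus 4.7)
The plan is to prove equality by sandwiching: the lower bound $\Ric(C_n)\ge 0$ comes for free from Theorem~\ref{cayley}, since $C_n$ is the Cayley graph of the abelian group $\Z/n\Z$ with generators $\{\pm 1\}$. So the entire content of the statement is the matching upper bound $\Ric(C_n)\le 0$, which I would establish by exhibiting a concrete witness function $f$ and vertex $x$ where $\Gamma_2(f)(x)\le 0$ while $\Gamma(f)(x)>0$. Note that Theorem~\ref{ricupper} only gives $\Ric(C_n)\le 2$ (since $C_n$ is triangle free for $n\ge 4$), so it is not sharp enough and some direct calculation is unavoidable.

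First I would specialize formula (\ref{boch}) to the cycle. Fix a vertex $x$, identify $V(C_n)$ with $\Z/n\Z$ so that $x=0$ and its neighbors are $-1$ and $1$. Because $n\ge 5$, the vertices $\pm 2$ at distance $2$ from $x$ are distinct from each other and from the neighbors, and there are no triangles through $x$; moreover every vertex has degree $2$. Hence the $\Delta(x,v,u)$ sum vanishes, the term involving $\frac{4-d(x)-d(v)}{2}$ vanishes, and the remaining distance-two pairs are exactly $(u,v)=(-2,-1)$ and $(u,v)=(2,1)$. For any $f$ with $f(0)=0$, formula (\ref{boch}) collapses to
\begin{equation*}
2\Gamma_2(f)(x)=\tfrac12\bigl(f(-2)-2f(-1)\bigr)^{2}+\tfrac12\bigl(f(2)-2f(1))^{2}+\bigl(f(-1)+f(1)\bigr)^{2},
\end{equation*}
while $2\Gamma(f)(x)=f(-1)^{2}+f(1)^{2}$.

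Now the witness function essentially writes itself: each of the three squares above can be killed simultaneously. I would take $f$ to agree with the signed coordinate on a short arc, setting $f(-2)=-2$, $f(-1)=-1$, $f(0)=0$, $f(1)=1$, $f(2)=2$, and extending $f$ arbitrarily elsewhere (the values outside the ball of radius $2$ around $x$ do not enter the computation of $\Gamma_2(f)(x)$). Then $f(-2)-2f(-1)=0$, $f(2)-2f(1)=0$, and $f(-1)+f(1)=0$, so $\Gamma_2(f)(x)=0$ whereas $\Gamma(f)(x)=1>0$. Since (\ref{bochsmall}) at this pair $(f,x)$ forces $K\le 0$, we conclude $\Ric(C_n)\le 0$, and combined with the Cayley lower bound the theorem follows.

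I do not expect any genuine obstacle; the only thing to be careful about is making sure the relevant neighborhood around $x$ looks like the infinite path, which is exactly the role of the hypothesis $n\ge 5$ (for $n=4$ the vertices $2$ and $-2$ coincide, which is why $C_4$ behaves like $H_2$ and has $\Ric=2$, and $n=3$ contains a triangle).
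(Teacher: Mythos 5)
Your proposal is correct and follows essentially the same route as the paper: the lower bound is imported from Theorem~\ref{cayley} since $C_n$ is an abelian Cayley graph, and the upper bound uses exactly the same witness function taking values $-2,-1,0,1,2$ along the path of radius $2$ around $x$. Your write-up merely makes explicit the specialization of (\ref{boch}) that the paper leaves implicit, and your remark about why $n\ge 5$ is needed matches the paper's observation that the relevant neighborhood must be a path.
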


\begin{proof}

We note that the calculation of $\Ric(G)$ at $x$ requires us to consider only the subgraph consisting of those vertices $v$ with $d(x,v)\leq 2$, and those edges incident to at least one neighbor of $x$.

If $n\geq 5$, this subgraph will always be a path of length $4$ centered at $x$, so we only need calculate the curvature for this graph.  $C_n$ is an abelian Cayley graph, thus $\Ric\geq 0$.

$\Ric = 0$ is achieved by the function $f$ that takes values $-2,-1,0,1,2$ in order along the path.
\end{proof}

\begin{corollary} Let $\Z^d$ represent the infinite $d$-dimensional lattice.  $\Ric(\Z^d) = 0$.\end{corollary}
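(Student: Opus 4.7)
The plan is to prove the two inequalities $\Ric(\Z^d)\ge 0$ and $\Ric(\Z^d)\le 0$ separately. The graph $\Z^d$ is the Cayley graph of the abelian group $\Z^d$ with its standard symmetric generating set $\{\pm e_1,\dots,\pm e_d\}$, so Theorem~\ref{cayley} applies directly and yields the lower bound $\Ric(\Z^d)\ge 0$. Strictly speaking, Theorem~\ref{cayley} is stated for finitely generated abelian groups; its proof, however, only manipulates sums over vertices within graph distance $2$ of a fixed vertex, so the argument goes through verbatim for $\Z^d$ (or one could cite that the proof is purely local and never uses finiteness).

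For the matching upper bound $\Ric(\Z^d)\le 0$, I would exhibit an explicit extremizer, mimicking the construction in Theorem~\ref{cycles}. Take $x=0$ to be the origin and let $f(v)=v_1$ be projection onto the first coordinate, the natural multidimensional analogue of the linear sequence $(-2,-1,0,1,2)$ used for $C_n$. Then $f(x)=0$ and among the $2d$ neighbors of $x$, only $v=\pm e_1$ have nonzero $f$-value, so $\Gamma(f)(x)=1$.

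The core of the argument is to plug $f$ and $x$ into formula~(\ref{bochother}) with degree $D=2d$ and check that $2\Gamma_2(f)(x)=0$. The first term contributes $2d\cdot\sum_{v\sim x}f^2(v)=4d$. The middle term $(\sum_{v\sim x}f(v))^2$ vanishes by the antisymmetry $f(e_1)+f(-e_1)=0$. The third term $\sum_{v\sim x}\sum_{u\sim v}\left(\tfrac12 f^2(u)-2f(u)f(v)\right)$ splits into two kinds of contribution: the $v=\pm e_1$ terms (where neighbors $u$ include $\pm 2e_1$, $0$, and $\pm e_1\pm e_j$ for $j\ne 1$) each contribute $-3d+1$, while the $v=\pm e_j$ terms for $j\ne 1$ each contribute $+1$ from the two neighbors $\pm e_j\pm e_1$ on which $f$ is nonzero. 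Summing gives $2(-3d+1)+2(d-1)=-4d$, cancelling the $4d$ from the first term exactly. Together with $\Gamma(f)(x)=1>0$, this forces $\Ric(\Z^d)\le 0$.

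The only real step is the bookkeeping in the third sum, which is straightforward because $f$ is supported on a single coordinate axis and the relevant neighborhood of $0$ is tree-like in all other directions. A shorter alternative, if one is willing to appeal to the tensorization property of $\Gamma_2$-curvature used implicitly in \cite{LP14, Sch99}, would be to write $\Z^d=(C_\infty)^d$ and invoke Theorem~\ref{cycles}, whose proof visibly applies to $C_\infty$ since it depends only on the graph structure within distance $2$ of a vertex. I would favor the direct computation since the tensorization result is merely referenced and not proved in the excerpt.
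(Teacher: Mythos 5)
Your proof is correct, but it takes a genuinely different route from the paper's. The paper's entire proof is the one-line observation that $\Z^d$ is the product of $d$ copies of $C_\infty$, combining $\Ric(C_\infty)=0$ from Theorem~\ref{cycles} with the tensorization result cited from \cite{LP14} --- precisely the ``shorter alternative'' you mention at the end and then set aside. Your main argument instead establishes the upper bound by a direct computation with the explicit test function $f(v)=v_1$ at $x=0$ in formula~(\ref{bochother}) with degree $2d$, and the bookkeeping checks out: $4d$ from the first term; $0$ from the second by the antisymmetry $f(e_1)+f(-e_1)=0$; and for the third term, each of $v=\pm e_1$ contributes $-2$ from $u=\pm 2e_1$ plus $-\tfrac32$ from each of the $2(d-1)$ vertices $\pm e_1\pm e_j$, giving $-3d+1$, while each $v=\pm e_j$ ($j\ne 1$) contributes $+1$, for a total of $2(-3d+1)+2(d-1)=-4d$; hence $2\Gamma_2(f)(0)=0$ while $\Gamma(f)(0)=1>0$. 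One small remark: your hedge about Theorem~\ref{cayley} is unnecessary, since $\Z^d$ with the standard generators \emph{is} a finitely generated abelian group with a finite generating set, which is exactly the hypothesis of that theorem, so it applies verbatim and the lower bound is immediate. What your approach buys is self-containedness --- the tensorization result is only referenced, never proved, in the paper --- at the cost of a page of arithmetic; what the paper's approach buys is brevity. Your extremizer is of course the pullback under the coordinate projection of the linear extremizer $(-2,-1,0,1,2)$ used for $C_n$, so the two arguments are closely related in spirit even where they diverge in form.
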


We simply note that $\Z^d$ is the product of $d$ copies of $C_\infty$.

\subsection{Slices of the hypercube}
\subsubsection{\texorpdfstring{$k$}{k}-slice with transpositions}

For some fixed value $k$ with $1\leq k < n$, let $G=(V,E)$ be the graph with $V = \{x\in \{0,1\}^n:\sum_i x_i = k\}$, and $x\sim y$ whenever $\abs{ \text{supp}(x-y)} = 2$.

\begin{theorem}This graph has curvature $\Ric = 1 + \frac{n}{2}$.
\end{theorem}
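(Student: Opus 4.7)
The upper bound $\Ric(G) \leq 1 + n/2$ should follow from Theorem~\ref{ricupper} once I count triangles through an arbitrary edge. Fix an edge $\{x, y\}$; then $x$ and $y$ differ at exactly one $1$-position $i$ and one $0$-position $j$ of $x$. Any common neighbour $z$ is obtained from $x$ by a single transposition $\{a, b\}$ with $x_a = 1, x_b = 0$, and similarly from $y$. Analysing how the symmetric differences $x \oplus z$ and $y \oplus z$ intersect $\{i, j\}$ produces a bijection between common neighbours and indices $\ell \in [n] \setminus \{i, j\}$, so $T = n - 2$ and $\Ric(G) \leq 2 + (n-2)/2 = 1 + n/2$.

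For the matching lower bound, fix $x$ with $f(x) = 0$ and by vertex-transitivity take $x$ to be the characteristic vector of $[k]$. Index the neighbours by $(i, j) \in A \times B$, with $A = [k]$ and $B = [k+1, n]$, and write $f_{ij}$ for $f$ at the corresponding neighbour. The graph is regular of degree $d = k(n-k)$. Two structural counts drive the calculation: each edge lies in exactly $n-2$ triangles (from above), and each vertex $u$ at distance $2$ from $x$ has exactly $r(u) = 4$ common neighbours with $x$, namely the $2 \times 2$ sub-grid $\{v_{i_\mu j_\nu}\}_{\mu,\nu \in \{1,2\}}$ determined by the two indices in $A$ and the two in $B$ in which $u$ differs from $x$. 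Plug these counts into \eqref{boch}, optimise over $f(u)$ for $d(x,u)=2$ via \eqref{umin}, and use the identity
\[
2 \sum_{\ell=1}^{4} a_\ell^2 - \tfrac{1}{2}\Bigl(\sum_{\ell=1}^{4} a_\ell\Bigr)^{\!2} = 2 \sum_{\ell=1}^{4} (a_\ell - \bar a)^2
\]
to rewrite the minimised distance-$2$ contribution as a sum, over all such $u$, of four times the empirical variance of $f$ on the corresponding $2 \times 2$ sub-grid.

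What remains is a quadratic form inequality in $\{f_{ij}\}$. I would expand the triangle sum and this variance sum as linear combinations of the four invariants $F_2 = \sum_{ij} f_{ij}^2$, $S = \sum_{ij} f_{ij}$, $\sum_i R_i^2$ with $R_i = \sum_j f_{ij}$, and $\sum_j C_j^2$ with $C_j = \sum_i f_{ij}$; all of the cross-term bookkeeping collapses into these four. The cleanest way to certify non-negativity is through the orthogonal decomposition $f_{ij} = \bar f + (a_i - \bar f) + (b_j - \bar f) + g_{ij}$, where $a_i = R_i/(n-k)$, $b_j = C_j/k$, $\bar f = S/d$, and $g$ has vanishing row and column sums. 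Setting $\alpha = d \bar f^2$, $\beta = (n-k)\sum_i (a_i - \bar f)^2$, $\gamma = k \sum_j (b_j - \bar f)^2$, $\delta = \sum_{ij} g_{ij}^2$, each of $F_2$, $S^2$, $\sum R_i^2$, $\sum C_j^2$ is a fixed non-negative combination of $\alpha, \beta, \gamma, \delta$, so the desired inequality decouples into four independent scalar checks; a direct computation should yield strictly positive coefficients in front of $\alpha, \beta, \gamma, \delta$, corresponding respectively to the Cauchy--Schwarz bound $S^2 \le d F_2$, to row- and column-wise Cauchy--Schwarz, and to trivial positivity of the residual mode. The main obstacle I foresee is purely bookkeeping: the sum over distance-$2$ vertices $u$ of the squared sum of four values creates many cross terms across different $u$, but the row/column decomposition should tame them cleanly.
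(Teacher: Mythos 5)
Your proposal is correct, but your lower-bound argument takes a genuinely different route from the paper's. The upper bound is the same in both: count $T=n-2$ triangles per edge and invoke Theorem~\ref{ricupper}. For the lower bound, the paper never computes the exact minimum over the distance-$2$ values: from each $2\times 2$ sub-grid of common neighbours $\{s_{ij}x,s_{im}x,s_{lj}x,s_{lm}x\}$ of a distance-$2$ vertex $u$ it keeps only the two \emph{diagonal} (non-adjacent) pairs, via the elementary inequality $(a-2b)^2+(a-2c)^2\ge 2(b-c)^2$; combined with the $2\parens{f(v)-f(w)}^2$ terms that the triangles contribute in (\ref{boch}), this produces the full sum $\sum_{v,w\sim x}\parens{f(v)-f(w)}^2$ over all unordered pairs of neighbours, which cancels against $\parens{\sum_{v\sim x}f(v)}^2$ through the identity $\sum_{v,w}\parens{f(v)-f(w)}^2+\parens{\sum_v f(v)}^2=d\sum_v f^2(v)$, and the degree count $1-d+\tfrac{n}{2}$ finishes the proof in a few lines. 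Your route --- exact minimizer (\ref{umin}), reduction to the invariants $F_2$, $S^2$, $\sum_i R_i^2$, $\sum_j C_j^2$, then the two-way ANOVA decomposition --- does work: carrying out the computation you defer, the minimized quantity $2\Gamma_2(f)(x)-(2+n)\Gamma(f)(x)$ comes out to $k\beta+(n-k)\gamma+\parens{\tfrac{d}{2}+n}\delta$ in your notation, which is indeed non-negative. One caveat: the coefficient of $\alpha$ is exactly $0$, not strictly positive --- as it must be, since the curvature bound is attained by $f\equiv 1$ on the neighbours of $x$ --- so your attribution of the $\alpha$-mode to the Cauchy--Schwarz bound $S^2\le dF_2$ is not quite what happens; $S^2$ actually enters $2\Gamma_2$ with a positive sign here and the $\alpha$-check is an exact cancellation. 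What your approach buys is the precise excess of $\Gamma_2$ over $\parens{1+\tfrac{n}{2}}\Gamma$ and hence a description of the extremal functions; what the paper's buys is brevity, since no exact minimization or ANOVA bookkeeping is needed.
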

\begin{proof}
Let $x\in V$.  Define $s_{ij}x$ to be the vertex obtained by exchanging coordinates $i$ and $j$ in $x$.
A vertex $u$ with $d(x,u) = 2$ will be $u = s_{ij}s_{lm}x$ for some distinct coordinates $i,j,l,m$ with $x_i = x_l = 1$, $x_j = x_m = 0$.  Vertices $v$ with $x\sim v\sim u$ are $s_{ij}x, s_{im}x, s_{lj}x, s_{lm}x$.  Observe that \[\sum_{v:x\sim v\sim u} \parens{f(u)-2f(v)}^2\geq 2\parens{f(s_{ij}x)-f(s_{lm}x)}^2 + 2\parens{f(s_{im}x)-f(s_{lj}x)}^2.\]
Summing over all vertices $u$ with $d(x,u) = 2$ gives \[\frac{1}{2}\sum_{\substack{x\sim v\sim u\\ d(x,u) = 2}} \parens{f(u)-2f(v)}^2\geq \sum_{\substack{v,w\sim x\\ \not\Delta(x,v,w)}} \parens{f(v)-f(w)}^2,\] as for each pair $v,w\sim x$ with $v\not\sim w$, there is exactly one $u$ with $v,w\sim u$ and $d(x,u) = 2$.  (Here we use the notation $\nottriag(x,v,w)$ to denote the set of unordered pairs $(v,w)$ of distinct neighbors of $x$ for which $v\not\sim w$.)

Also notice that any $v\sim x$ has $t(\{x,v\}) = n-2$: if $v = s_{ij}x$, the vertices that make a triangle with $x$ and $v$ are $s_{lj}x$ when $l\neq i$ and $x_l = x_i$, and $s_{im}x$ when $m\neq j$ and $x_m = x_j$.

Now we may compute
\begin{align*}
\lefteqn{2\Gamma_2(f)(x)}\qquad&\\
& \geq \sum_{\substack{v,w\sim
      x\\ \not\Delta(xvw)}} \parens{f(v)-f(w)}^2 + \parens{\sum_{v\sim
      x}f(v)}^2 + \parens{2-d + \frac{n-2}{2}}\sum_{v\sim x}f(v)^2 \\
&\qquad + 2\sum_{\Delta(vwx)}\parens{f(v)-f(w)}^2\\
&\geq \sum_{v,w\sim x} \parens{f(v)-f(w)}^2 + \parens{\sum_{v\sim x}f(v)}^2 + \parens{1-d+ \frac{n}{2}}\sum_{v\sim x}f(v)^2\\
& = (d-1)\sum_{v\sim x}f(v)^2 -2\sum_{v,w\sim x} f(v)f(w) +
  \sum_{v\sim x}f(v)^2 + 2\sum_{v,w\sim x}f(v)f(w) \\
&\qquad + (1-d + \frac{n}{2})\sum_{v\sim x}f(v)^2\\
&=  \parens{1 + \frac{n}{2}}\sum_{v\sim x}f(v)^2\,.
\end{align*}
So $\text{Ric}(G)\geq 1 + \frac{n}{2}$. Together with Theorem~\ref{ricupper} we get that $\Ric = 1 + \frac{n}{2}$.
\end{proof}
\subsubsection{Middle slice with adjacent transpositions}
We now consider $G$ with $V = \{x\in \{-1,1\}^{2n}:\sum_i x_i = 0\}$, where $x\sim y \iff \text{supp}(x-y)$ consists of $2$ consecutive elements.  Alternately, $V$ is the set of paths in $\Z^2$ that move from $(0,0)$ to $(2n,0)$ with steps of $(+1,+1)$ and $(+1,-1)$, and paths $x$ and $y$ are neighbors if y can be achieved by transposing an adjacent $(+1,+1)$ and $(+1,-1)$ in $x$.

\begin{theorem}$\Ric(G) \geq -1$.  Further, $\displaystyle \lim_{n\to \infty} \Ric(G) = -1$. \end{theorem}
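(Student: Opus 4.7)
The plan exploits two structural features of $G$. First, $G$ is triangle-free: any adjacent swap changes the inversion count $|\{(i,j): i<j,\ x_i>x_j\}|$ by exactly $\pm 1$, ruling out odd closed walks, so the triangle sum in (\ref{boch}) is empty. Second, set $f(x)=0$, write $N(x)\subset\{1,\dots,2n-1\}$ for the set of "swap positions" (those $i$ with $x_i\neq x_{i+1}$), and let $v_i:=s_i x$, $f_i:=f(v_i)$, $d:=|N(x)|$. A short case analysis of two adjacent transpositions shows that the distance-two vertices of $x$ come in exactly two flavours: "commuting" vertices $u=s_i s_j x$ for pairs $\{i,j\}\subset N(x)$ with $|i-j|\geq 2$, having $r(u)=2$ and contributing $(f_i-f_j)^2$ after minimization via (\ref{umin}); and "sliding" vertices $s_{k\pm 1}s_k x$ (valid only when $k$ has no partner in $N(x)$ on the relevant side), having $r(u)=1$ and tunable to contribute $0$. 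Minimizing over the values of $f$ at distance-two vertices gives
\[
2\Gamma_2(f)(x)\geq\sum_{\substack{i,j\in N(x)\\|i-j|\geq 2}}(f_i-f_j)^2+\Big(\sum_{i\in N(x)} f_i\Big)^2+\sum_{i\in N(x)}\frac{4-d-d(v_i)}{2}f_i^2.
\]

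The heart of the argument is a local degree identity: $d(v_i)-d=2(1-\ell_i)$ for interior $i$ (and $1-2\ell_i$ at endpoints $i\in\{1,2n-1\}$), where $\ell_i:=|\{j\in N(x):|i-j|=1\}|\in\{0,1,2\}$. This is verified by enumerating the four possible patterns of $(x_{i-1},x_{i+2})$ given $x_i\neq x_{i+1}$. It is what balances the possibly large coefficient $\tfrac{4-d-d(v_i)}{2}$ against the "adjacency deficit" $\ell_i$, and is the main technical obstacle of the proof. After substitution, all cross terms with $|i-j|\geq 2$ cancel, leaving
\[
2(\Gamma_2+\Gamma)(f)(x)\geq \sum_{i\in N(x)} c_i f_i^2 + 2\!\!\sum_{\substack{i,j\in N(x)\\|i-j|=1}}\!\! f_i f_j,
\]
with $c_i=2$ in the interior and $c_i=5/2$ at the boundary. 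The adjacency graph on $N(x)$ (with $i\sim j$ iff $|i-j|=1$) is a disjoint union of paths; on each maximal path $p_1<\cdots<p_k$ the contribution $2\sum_j f_{p_j}^2+2\sum_j f_{p_j}f_{p_{j+1}}$ equals $\sum_{j=1}^{k-1}(f_{p_j}+f_{p_{j+1}})^2+f_{p_1}^2+f_{p_k}^2\geq 0$, and the boundary bonuses $\tfrac12 f_i^2$ only help. This establishes $\Ric(G)\geq -1$.

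For the matching upper bound, take $x=(+1,-1,+1,-1,\dots,+1,-1)$, so that $N(x)=\{1,\dots,2n-1\}$ forms a single path in the adjacency structure, and choose $f_i=(-1)^i$ with distance-two values supplied by (\ref{umin}). Direct substitution gives $2(\Gamma_2+\Gamma)(f)(x)=3$ while $2\Gamma(f)(x)=2n-1$, so $\Gamma_2(f)(x)/\Gamma(f)(x)=-1+3/(2n-1)\to -1$. Combined with the lower bound this yields $\lim_{n\to\infty}\Ric(G)=-1$.
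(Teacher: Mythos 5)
Your proof is correct and follows essentially the same route as the paper's: the same local degree identity $d(a_i x)-d(x)=2-2\ell_i-\mathbbm{1}_{i=1}-\mathbbm{1}_{i=2n-1}$, the same lower bound $\sum_{|i-j|\ge 2}(f_i-f_j)^2$ on the distance-two sum via (\ref{umin}), the same cancellation of the far cross terms against $\left(\sum_i f_i\right)^2$, and the same alternating configuration with $f_i=(-1)^i$ as the near-extremal example. Your two refinements --- rewriting the residual quadratic form as a telescoping sum of squares over the maximal paths of the adjacency structure on $N(x)$, which gives a clean $\Ric\ge -1$ where the paper settles for a slightly informal strict inequality, and the exact evaluation $2(\Gamma_2+\Gamma)(f)(x)=3$ yielding $\Ric\le -1+3/(2n-1)$ --- sharpen the write-up but do not change the argument.
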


\begin{proof}
Let $x\in V$.  Let $I(x) = \{i\in \{1,\dots,2n-1\}: x_i\neq x_{i+1}\}$, so $i\in I$ if and only if we are allowed to switch segments $i$ and $i+1$.  If $i\in I(x)$, denote by $a_ix$ the vertex obtained by making this switch.  Observe $\abs{I(x)} = \text{deg}(x)$.

The neighbors of $a_i x$ are: $a_i(a_i x) = x$, $a_j(a_i x)$ for any $j\in I(x)$ with $|i-j|>1$, and $a_j(a_i x)$ for any $j\notin I(x)$ with $|i-j| = 1$ and $j\neq 0,2n$.  We calculate that $\text{deg}(a_i x) = \text{deg}(x) + 2 - 2\#\{j\in I(x): |i-j| = 1\} - \mathbbm{1}_{i = 1}-\mathbbm{1}_{i=2n-1}$.

We observe that a neighbor of the form $a_j(a_i x)$ if $j\in I(x)$ and $|i-j|>1$ will be identical to $a_i(a_j x)$, and have $d(x,a_ja_ix) = 2$.

Now, for any function $f$,
\begin{align*}& \frac{1}{2}\sum_{\substack{u\sim v\sim x\\ d(x,u) = 2}} \parens{f(u)-2f(v)}^2 \\ \geq & \frac{1}{2}\sum_{\substack{i,j\in I\\ |i-j|>1}} \parens{f(a_ia_j x)-2f(a_i x)}^2 +\parens{f(a_ia_j x)-2f(a_j x)}^2\\ \geq &\sum_{\substack{i,j\in I\\ |i-j|>1}} \parens{f(a_i x)-f(a_j x)}^2 \\
= & \sum_{i\in I(x)}\#\{j\in I(x):|j-i| > 1\}f^2(a_i x)-2\sum_{\substack{i,j\in I\\ |i-j|>1}} f(a_i x)f (a_j x)\,.
\end{align*}
Observe that $G$ is triangle-free.  We have that
\begin{align*}&2\Gamma_2(f)(x)\\ \geq & \sum_{i\in I(x)}\#\{j\in I(x):|j-i| > 1\}f^2(a_i x)-2\sum_{\substack{i,j\in I\\ |i-j|>1}} f(a_i x)f (a_j x)\\
+ &\sum_{i\in I(x)} f^2(a_i x) + 2\sum_{i,j\in I} f(a_i x)f(a_j x)\\
+ & \sum_{i\in I(x)}\parens{2-\frac{2\cdot \text{deg}(x) + 2 - 2\#\{j\in I(x): |i-j| = 1\} - \mathbbm{1}_{i = 1}-\mathbbm{1}_{i=2n-1}}{2}}f^2(a_i x)\\
\geq & \sum_{i\in I(x)}\parens{\#\{j\in I(x):i\neq j\} + 2 - \text{deg}(x)}f^2(a_i x) + 2\sum_{\substack{i,j\in I\\ |i-j|=1}} f(a_i x) f(a_j x)\\
 = &\sum_{i\in I(x)} f^2(a_i x) + 2\sum_{\substack{i,j\in I\\ |i-j|=1}} f(a_i x) f(a_j x) \\
 > & -\sum_{i\in I(x)}f^2(a_i x) + \sum_{\substack{i,j\in I\\ |i-j|=1}} \parens{f(a_i x)+f(a_j x)}^2 \geq -2\Gamma(f)(x)\,.
\end{align*}

So $\text{Ric}(G) > -1$, where we ignore a slight  dependence on $n$ in the lower order term.

Define a function with $f(+1,-1,+1,-1,...) = 0$ and $f(a_i x) = f(x) - x_i$, that is, if the switch lowers the path, $f$ decreases by $1$; a switch that raises the path will increase $f$ by $1$.

Using this $f$ and $x = (+1,-1,+1,-1,...)$, we find that $\text{Ric} \to -1$ as $n\to \infty$.
\end{proof}

We now calculate the curvature for the subgraph $G_+$ that is induced on the Dyck paths, i.e., those paths that are always on or above the $x$-axis. Alternately, sequences in $\{\pm 1\}^{2n}$ with $\sum_{i = 1}^{2n} x_i = 0$ and $\sum_{i = 1}^j x_i \geq 0$ for all $j = 0,...,2n$.  It is well-known that the number of Dyck paths is the Catalan number $C_n$.

\begin{corollary}For this subgraph $G_+$, $\Ric(G_+) \geq -1$.  Further, $\displaystyle \lim_{n\to \infty} \Ric(G_+) = -1$. \end{corollary}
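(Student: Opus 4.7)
My plan is to mirror the proof of the preceding theorem inside the subgraph $G_+$, with the set of allowed switches at a Dyck path $x$ replaced by $I_+(x):=\{i\in I(x):a_ix\in G_+\}$ and all degrees recomputed in $G_+$. The key structural observation is that for $i,j\in I_+(x)$ with $|i-j|>1$, the switches $a_i$ and $a_j$ commute and act on disjoint pairs of coordinates, so they modify the path heights at positions $i$ and $j$ independently; hence $a_ia_jx$ is automatically a Dyck path and is a common distance-$2$ neighbor of $a_ix$ and $a_jx$ in $G_+$. This yields the same lower bound on the distance-$2$ sum in (boch) as in $G$, with $I(x)$ replaced by $I_+(x)$.

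To control the degree term, write $d_+=|I_+(x)|$, $\alpha_i=\#\{j\in I(x):|i-j|=1\}$ and $\beta_i=\#\{j\in I_+(x):|i-j|=1\}\le\alpha_i$. A short case analysis of the neighbors of $a_ix$ in $G_+$ -- separating $j=i$, commuting $j$ (which, as above, requires $j\in I_+(x)$), and adjacent $j$ (which, exactly as in the proof for $G$, requires $j\notin I(x)$ and so contributes at most $2-\alpha_i$ terms) -- gives $\deg_{G_+}(a_ix)\le d_++2-\alpha_i-\beta_i$. Substituting into (boch) and simplifying, the coefficient of $f^2(a_ix)$ works out to at least $1+(\alpha_i-\beta_i)/2\ge 1$, exactly as for $G$ (with $\beta_i=\alpha_i$ recovering the theorem's value). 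The closing step, applying $\sum_{|i-j|=1}(f(a_ix)+f(a_jx))^2\ge 0$, then carries over verbatim and yields $2\Gamma_2(f)(x)\ge -2\Gamma(f)(x)$, hence $\Ric(G_+)\ge -1$.

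For the matching upper limit $\Ric(G_+)\to-1$, I take $x=(+1)^2(+1,-1)^{n-2}(-1)^2$, whose middle zigzag sits at heights in $\{2,3\}$. Any single swap or pair of commuting swaps in the zigzag region leaves all heights at least $1$, so a direct check confirms that no edge is lost from the distance-$2$ ball of $x$ when passing from $G$ to $G_+$. Consequently, the witness function $f(a_ix)=-x_i$ from the proof of the preceding theorem can be plugged into (boch) without modification, and the same calculation produces $\Gamma_2(f)(x)/\Gamma(f)(x)\to -1$ as $n\to\infty$, up to the same lower-order boundary corrections as in the $G$ case. Combined with the lower bound, this yields $\lim_{n\to\infty}\Ric(G_+)=-1$.

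The main obstacle is the degree bookkeeping in the lower-bound step: $\deg_{G_+}$ is subtler than $\deg_G$ because switches at adjacent positions do not commute, so the "adjacent" contribution to $\deg_{G_+}(a_ix)$ depends both on $\alpha_i$ (through the identity $j\in I(a_ix)\iff j\notin I(x)$ for $|i-j|=1$) and on whether $a_{i\pm1}a_ix$ remains a Dyck path. The key monotone observation is that dropping a Dyck-violating edge reduces $\deg_{G_+}$ (which enters with a negative sign in the $(4-d-d)/2$ term and is therefore favorable) while only removing a nonnegative term from the distance-$2$ sum; keeping track of both effects simultaneously is what forces the coefficient of $f^2(a_ix)$ to stay $\ge 1$.
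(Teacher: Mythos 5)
Your argument is correct and follows essentially the same route as the paper's proof sketch: you reduce to the analysis of the unrestricted middle-slice theorem via the degree inequality $\deg_{G_+}(a_ix)\le \deg_{G_+}(x)+2-\alpha_i-\beta_i$ (noting that dropping Dyck-violating vertices only lowers degrees and removes nonnegative terms), and for the upper bound you use the same raised zigzag $x$ (your $(+1)^2(+1,-1)^{n-2}(-1)^2$ coincides with the paper's ``first $-1$ and last $+1$ transposed'' configuration) together with the same test function. The extra bookkeeping you supply, in particular the verification that $a_ia_jx$ remains a Dyck path for commuting switches $i,j\in I_+(x)$ and the coefficient $1+(\alpha_i-\beta_i)/2\ge 1$, correctly fills in what the paper leaves as ``the same analysis as in the unrestricted problem.''
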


\begin{proof}[Proof sketch]
Let $x\in V$, and let
\[
I(x) = \{i\in [2n-1]:\text{ a possible move is to transpose } x_i,
x_{i+1}\}.
\]
If $i\in I$, let $a_ix$ be the sequence obtained by transposing $x_i,x_{i+1}$.

Observe that $\text{deg}(a_i x)\leq \text{deg}(x) + 2 - 2\#\{j\in I(x): |i-j| = 1\} - \mathbbm{1}_{i = 1}-\mathbbm{1}_{i=2n-1}$.  Using the same analysis as in the unrestricted problem, we may conclude that
\begin{align*}&2\Gamma_2(f)(x) \geq -2\Gamma(f)(x).
\end{align*}

A similar test-function as above will prove that $\text{Ric} \leq  -1
+ o(1)$.  We may use the same function $f$, and take $x$ identical to
the above example but with the first $-1$ and last $+1$ transposed.
This will give a similar upper bound on Ric. (Observe that the
neighbors and second-neighbors of $x$ in the unrestricted graph are
all Dyck paths, so the curvature at $x$ will be unchanged from the
original.)
\end{proof}

\subsection{The symmetric group \texorpdfstring{$S_n$}{} with all transpositions}
\begin{theorem}\label{symmetric} Let $G$ be the Cayley graph on the
  symmetric group $S_n$ with all transpositions as generators.  Then $\Ric(G) = 2$.\end{theorem}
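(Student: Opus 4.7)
The plan is to combine the upper bound of Theorem~\ref{ricupper} with an explicit Bochner computation at the identity, in the spirit of the proofs for $K_n$ and for the abelian case. For the upper bound I would first observe that the graph is triangle-free: any triangle $e\sim s_1\sim s_1s_2\sim e$ would force $s_1s_2\in S$, but $s_1s_2$ is an even permutation while every transposition is odd. Hence the triangle count $T=0$, and Theorem~\ref{ricupper} yields $\Ric(G)\le 2$.

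For the lower bound, by vertex-transitivity it suffices to take $x=e$ and assume $f(e)=0$. Since the graph has no triangles, the $\Delta(x,v,u)$ term in (\ref{boch}) vanishes; with $d=\binom{n}{2}$ this leaves
\[
2\Gamma_2(f)(e)=\Big(\sum_{v\sim e}f(v)\Big)^2+(2-d)\sum_{v\sim e}f(v)^2+\frac{1}{2}\sum_{\substack{u:\,d(e,u)=2\\ v:\,v\sim e,\,v\sim u}}\!\!\big(f(u)-2f(v)\big)^2.
\]
The vertices at distance two from $e$ come in two kinds: $3$-cycles $u=(ijk)$, whose common neighbors with $e$ are exactly the three transpositions $(ij),(jk),(ki)$, so $r(u)=3$; and products of disjoint transpositions $u=(ij)(kl)$, whose common neighbors are $(ij),(kl)$, so $r(u)=2$.

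Since the values $f(u)$ appear only in the last sum, I would minimize over them using (\ref{umin}); the contribution of $u$ becomes $4\sum_v f(v)^2-\tfrac{4}{r(u)}(\sum_v f(v))^2$. Writing $f_{ij}:=f((ij))$ and $A_i:=\sum_{j\ne i}f_{ij}$, and using that each transposition lies in $n-2$ unordered triples $\{i,j,k\}$ and in $\binom{n-2}{2}$ disjoint pairs, collecting terms in (\ref{boch}) should reduce the desired $2\Gamma_2(f)(e)\ge 4\Gamma(f)(e)=2\sum_v f(v)^2$ to
\[
2(n-1)\sum_{ij}f_{ij}^2\;\ge\;\sum_{i=1}^n A_i^2,
\]
which is exactly Cauchy--Schwarz applied to each $A_i$, summed over $i$ (using $\sum_i\sum_{j\ne i}f_{ij}^2=2\sum_{ij}f_{ij}^2$). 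Combined with the upper bound this yields $\Ric(G)=2$.

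The main (purely combinatorial) hurdle is the coefficient matching, in particular converting the $3$-cycle cross terms into a quadratic form in the $f_{ij}$'s. The identity I would lean on, obtained by pivoting on the shared index of each incident pair of transpositions, is
\[
\sum_{\substack{\{(ij),(kl)\}\\ |\{i,j\}\cap\{k,l\}|=1}}\!\!f_{ij}f_{kl}\;=\;\frac{1}{2}\sum_{i=1}^n A_i^2-\sum_{ij}f_{ij}^2.
\]
Once this and the trivial analogue for disjoint-pair contributions are in hand, the remaining work is routine expansion and tracking of the incident-pair contributions from $3$-cycles against the disjoint-pair contributions from double transpositions.
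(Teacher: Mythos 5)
Your proposal is correct and takes essentially the same route as the paper: triangle-freeness plus Theorem~\ref{ricupper} for the upper bound, and for the lower bound the same classification of distance-two vertices into $3$-cycles (with $r(u)=3$) and products of disjoint transpositions (with $r(u)=2$), followed by the minimization (\ref{umin}). The only difference is the finish --- the paper relaxes the $4/3$ coefficient on the incident-pair cross terms to $1$ so that the minimized sum is bounded below by $\sum_{v,w\sim x}(f(v)-f(w))^2$ over all unordered pairs, which cancels exactly against $\bigl(\sum_{v\sim x}f(v)\bigr)^2$ with no further inequality, whereas you keep exact coefficients and close with Cauchy--Schwarz; your bookkeeping checks out (the residual inequality is indeed $2(n-1)\sum f_{ij}^2\ge\sum_i A_i^2$, and your identity for the incident-pair cross terms is correct).
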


Let us remark that in recent work \cite{EMT14}
the authors also provided a lower bound for the Ricci curvature of the
(Cayley) graph on the symmetric group with the edge set given by
transpositions, but with a different notion of
Ricci curvature, one developed by Erbar and Maas \cite{EM12}.  It is easy to see that the Ricci curvature developed by Ollivier \cite{O09} gives a value of $\kappa = 2/\binom{n}{2}$ for this problem in the setting of a Markov chain. A simple coupling argument shows that this agrees with our result, modulo the normalizing factor between the graph setting and the Markov chain setting.

\begin{proof}
Let $x\in S_n$.  A vertex $u$ with $d(u,x) = 2$ will either be $(ijk)x$ for some distinct $i,j,k\in [n]$ or $(ij)(kl)x$ for distinct $i,j,k,l\in [n]$.

In the first case, the vertices $v$ s.t. $(ijk)x \sim v\sim x$ are $v = (ij)x, (ik)x, (jk)x$.
For $u = (ijk)(x)$,
\begin{align*}
\lefteqn{\sum_{v:u\sim v\sim x} \parens{f(u)-2f(v)}^2}\qquad &\\
&= \big(f(u)-2f((ij)x)\big)^2 + \big(f(u)-2f((ik)x)\big)^2 + \big(f(u)-2f((jk)x)\big)^2 \\
& \geq \frac{4}{3}\left[\big(f((ij)x)-f((ik)x)\big)^2 +
  \big(f((ij)x)-f((jk)x)\big)^2 + \big(f((ik)x)-f((jk)x)\big)^2\right].
\end{align*}
In the second case, a $v$ such that $(ij)(kl)x \sim v\sim x$ is either
$v = (ij)x$ or $v = (kl)x$.
If $u = (ij)(kl)(x)$,
\[\sum_{v : u\sim v\sim x} \big(f(u)-2f(v)\big)^2 \geq 2\big(f((ij)x) - f((kl)x)\big)^2.\]
Taking a sum over all values of $u$ gives \begin{align*}\frac{1}{2}\sum_{\substack{u\sim v\sim x\\ d(u,x) =2}} \parens{f(u)-2f(v)}^2 \geq \sum_{v,w\sim x} \parens{f(v)-f(w)}^2.\end{align*}
Indeed, if $v,w$ are $v = (ij)x$ and $w = (ik)x$ for some $i,j,k$, the term $\parens{f(v)-f(w)}$ is counted twice in the sum: for $u = (ijk)x $ and $u = (ikj) x$.  If $v,w$ are $v = (ij)x$ and $w = (kl)x$ for some $i,j,k,l$, the term $2\parens{f(v)-f(w)}$ is counted once: for $u = (ij)(kl)x$.

Observe that $G$ is triangle-free and regular with degree $d = \binom{n}{2}$.
Using this bound, we see that
\begin{align*}
2\Gamma_2(f)(x)
  & \geq \sum_{v,w\sim x} \parens{f(v)-f(w)}^2 + \Big(\sum_{v\sim x}f(v)\Big)^2
     + (2-d)\sum_{v\sim x}f^2(v)\\
  & = 2\sum_{v\sim x}f^2(v) = 4\Gamma(f)(x).
\end{align*}
Therefore $\Ric \geq 2$, as $G$ is triangle-free, $\Ric = 2$ by Theorem \ref{ricupper}.
\end{proof}

\section{Spectral gap and curvature}\label{sec:gap}

Let $\lambda(G)$ denote the spectral gap of $G$; i.e., the least nonzero eigenvalue of $-\Delta$.

\begin{theorem}\label{thm:gap_curvature}
Let $G$ be a graph with curvature $\text{Ric} \geq K\geq 0$.  Then $\lambda \ge K$.
\end{theorem}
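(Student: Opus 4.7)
The plan is to integrate (sum) the pointwise Bochner inequality $\Gamma_2(f) \ge K \Gamma(f)$ against an eigenfunction of $-\Delta$ and read off the inequality $\lambda \ge K$. Specifically, let $f : V \to \R$ be a nonzero eigenfunction of $-\Delta$ with eigenvalue $\lambda$, so that $\Delta f = -\lambda f$; I will assume $G$ is finite (or that $f$ is suitably summable) so that sums over $V$ make sense, since the statement of the theorem is really about the existence of the bottom of the nonzero spectrum.

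The first step is to sum the definition $2\Gamma_2(f) = \Delta \Gamma(f) - 2\Gamma(f,\Delta f)$ over $x \in V$. The term $\sum_x \Delta \Gamma(f)(x)$ vanishes, because for any function $g$ on a finite graph $\sum_x \Delta g(x) = \sum_x \sum_{y\sim x}(g(y)-g(x)) = 0$ (each edge contributes with both signs). For the second term, identity (\ref{eq:global}) with $g = \Delta f$ gives
\begin{equation*}
\sum_{x\in V} \Gamma(f,\Delta f)(x) = -\sum_{x\in V} f(x)\,\Delta^2 f(x) = -\lambda^2 \sum_{x\in V} f(x)^2,
\end{equation*}
using $\Delta^2 f = \lambda^2 f$. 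Hence
\begin{equation*}
\sum_{x\in V} \Gamma_2(f)(x) = \lambda^2 \sum_{x\in V} f(x)^2.
\end{equation*}

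Similarly, (\ref{eq:global}) with $g = f$ yields $\sum_x \Gamma(f)(x) = -\sum_x f(x)\Delta f(x) = \lambda \sum_x f(x)^2$. Summing the hypothesis $\Gamma_2(f)(x) \ge K\,\Gamma(f)(x)$ over $x$ then gives
\begin{equation*}
\lambda^2 \sum_{x\in V} f(x)^2 \;\ge\; K\lambda \sum_{x\in V} f(x)^2.
\end{equation*}
Since $\lambda > 0$ and $\sum_x f^2 > 0$, dividing through yields $\lambda \ge K$, which is the claim.

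The argument is essentially the classical Bakry--\'Emery spectral gap proof; the only thing to verify is that nothing goes wrong in the discrete setting, and indeed the two algebraic facts we need---the ``integration by parts'' identity (\ref{eq:global}) and the vanishing of $\sum_x \Delta g$---were already set up in the preliminaries, so no diffusion-type chain rule is invoked. Thus I do not expect any real obstacle; the only mildly delicate point is ensuring the global sums are well defined, which is automatic for finite $G$ and extends to the infinite case by standard arguments applied to spectral projections onto compact intervals of $-\Delta$.
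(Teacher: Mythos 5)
Your proof is correct and follows essentially the same route as the paper: both sum the pointwise Bochner inequality over $V$, kill the $\sum_x \Delta\Gamma(f)(x)$ term, and apply the integration-by-parts identity (\ref{eq:global}). The only cosmetic difference is that you specialize to an eigenfunction of $-\Delta$ and divide out $\lambda\sum_x f^2$, whereas the paper keeps $f$ general and invokes the variational characterization $\lambda=\min_f \mathcal{E}(-\Delta f,f)/\mathcal{E}(f,f)$; the underlying computation is identical.
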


A different proof of this result was given in \cite{CLY14}.

\begin{proof}
We may use the 2nd derivative versus the first derivative (of variance of the heat kernel) characterization of the spectral gap (see e.g. \cite{MT06}).
\begin{align*} \lambda = \min_f\frac{\mathcal{E}(-\Delta f, f)}{\mathcal{E}(f,f)},\end{align*} so that $\alpha \leq \lambda$ if and only if, for any function $f$, we have $\alpha\cdot\mathcal{E}(f,f) \leq \mathcal{E}(-\Delta f,f)$.

By assumption, $G$ satisfies (\ref{bochsmall}) with parameter $K$, i.e., that $$\Delta\Gamma(f)(x) - 2\Gamma(f,\Delta f)(x) - 2K \Gamma(f)(x) \geq 0\,,$$
for all functions $f:V\to \R$ and all $x\in V$.
Summing the above inequality over all vertices gives
\begin{align*}
\lefteqn{\sum_x \Delta\Gamma(f)(x) - 2\sum_x\Gamma(\Delta f,f)(x)
    - 2K\sum_x \Gamma(f)(x)}\qquad&\\
 & = 2\sum_x(\Delta f(x))^2 - K\sum_x \sum_{y\sim x} \parens{f(y)-f(x)}^2\\
 & = 2\sum_x (\Delta f(x))^2 - 2K\sum_{x\sim y} \parens{f(y)-f(x)}^2 \geq 0
\end{align*}
where in the first equality, we used the identity (\ref{eq:global})
and the fact that for any $g$, $\sum\Delta g=0$.

Now let $|V|=n$, and recall the Dirichlet form (with respect to the
measure $\pi\equiv 1$),
\begin{align*}
  \mathcal{E}(f,f) = \sum_{x\sim y} (f(y)-f(x))^2
\end{align*}
and that
\begin{align*}
\mathcal{E}(-\Delta f, f) = \sum_x -\Delta f(x) \Big(\sum_{y\sim x} (f(x)-f(y))\Big)
  = \sum_x (\Delta f(x))^2\,.
\end{align*}
Plugging into the above inequality gives
\begin{align*}
  2\mathcal{E}(-\Delta f, f) - 2K\mathcal{E}(f,f)\geq 0,
\end{align*}
and so
\begin{align*}
  K\mathcal{E}(f,f)\leq \mathcal{E}(-\Delta f,f)\,,
\end{align*}
resulting in $\lambda \geq K$\,.
\end{proof}


\section{Buser-type Inequalities}\label{sec:Buser}

The proofs in this section are a straightforward discrete version of \S~5 of Ledoux's paper \cite{Ledoux04spectralgap}.
First we derive a key gradient estimate on the heat kernel associated with a graph, which will then be used in deriving a Buser inequality for graphs, as mentioned in the introduction.

\subsection{Gradient estimates}

For $t \geq 0$, we write $P_t = \exp(t \triangle)$ for the heat kernel associated with the graph $G$.
Then $P_t$ is a positive definite matrix on $\RR^V$,
with $P_0$ being the identity matrix. Note that $P_t$
commutes with $\triangle$ and with $P_s$,
and that $\partial P_t /
\partial t = P_t \triangle = \triangle P_t$.
Finally, the matrix $P_t$ has non-negative entries.
So if $f$ has non-negative entries, then
also $P_t(f)$ has non-negative entries.
For a vector $f: V \rightarrow \RR$
we write $\| f \|_p = (\sum_v |f(v)|^p)^{1/p}$.

\begin{lemma} Suppose $G$ has $\Ric(G) \geq K$ for some $K\in \R$. Then, for any $f: V \rightarrow
\RR$ and any $0 \leq t \leq 1/|2K|$,
$$ \| f - P_t f \|_1 \leq 2 \sqrt{t} \| \sqrt{\Gamma( f)}  \|_1 \,.$$
 \label{lem1}
\end{lemma}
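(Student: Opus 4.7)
The plan is to mimic Ledoux's continuous-semigroup proof of Buser in the discrete setting, using only the $\Gamma_2$-calculus already built into $\Ric(G)\ge K$; in particular no diffusion hypothesis is required. First I would write
\[ f - P_t f = -\int_0^t \Delta P_s f \, ds, \]
take $L^1$-norms, and apply the integral triangle inequality. This reduces the lemma to the single-$s$ bound
\[ \|\Delta P_s f\|_1 \le s^{-1/2}\,\|\sqrt{\Gamma(f)}\|_1, \qquad 0 < s \le 1/|2K|, \]
since then $\int_0^t s^{-1/2}\,ds = 2\sqrt{t}$ yields the stated inequality.

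To prove this single-$s$ bound I would use $L^1$--$L^\infty$ duality. For any $g$ with $\|g\|_\infty \le 1$, the self-adjointness of $P_s$ together with the integration-by-parts identity (\ref{eq:global}) gives
\[ \sum_x g(x)\,\Delta P_s f(x) = \sum_x (P_s g)(x)\,\Delta f(x) = -\sum_x \Gamma(P_s g, f)(x). \]
The vertex-wise Cauchy--Schwarz inequality $|\Gamma(P_s g, f)(x)| \le \sqrt{\Gamma(P_s g)(x)\,\Gamma(f)(x)}$ followed by Hölder reduces the whole matter to the reverse local Poincaré estimate
\[ \|\sqrt{\Gamma(P_s g)}\|_\infty \le s^{-1/2} \quad\text{whenever } \|g\|_\infty \le 1 \text{ and } s \le 1/|2K|. \]

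This is the main substantive step, and I would derive it by two parallel Bakry--Émery-style computations. First, $\phi(u) := P_u(\Gamma(P_{s-u} g))$ has $\phi'(u) = 2 P_u \Gamma_2(P_{s-u} g) \ge 2K\phi(u)$ by $\Ric \ge K$, giving the gradient contraction $\Gamma(P_u h) \le e^{-2Ku}\,P_u \Gamma(h)$. Second, $\psi(u) := P_u((P_{s-u} g)^2)$ has, using the local identity (\ref{eq:local1}), $\psi'(u) = 2 P_u \Gamma(P_{s-u} g)$, yielding the variance identity
\[ P_s(g^2) - (P_s g)^2 = 2\int_0^s P_u\,\Gamma(P_{s-u} g)\,du. \]
Writing $P_s g = P_u(P_{s-u} g)$ and applying the gradient contraction in the reverse direction gives $P_u\,\Gamma(P_{s-u} g) \ge e^{2Ku}\,\Gamma(P_s g)$, so
\[ P_s(g^2) - (P_s g)^2 \ge \frac{e^{2Ks}-1}{K}\,\Gamma(P_s g) \ge s\,\Gamma(P_s g), \]
where the second inequality is the sole place where $s \le 1/|2K|$ is used (it is trivial for $K \ge 0$ via $e^x - 1 \ge x$, and for $K < 0$ it follows from the elementary bound $1 - e^{-x} \ge x/2$ on $[0,1]$). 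Since $P_s$ is an $L^\infty$-contraction, $P_s(g^2) \le \|g\|_\infty^2 \le 1$, and therefore $\Gamma(P_s g) \le 1/s$, as needed.

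The main obstacle is precisely the reverse local Poincaré inequality above; once it is in hand, assembling the pieces and performing the integration $\int_0^t s^{-1/2}\,ds = 2\sqrt t$ is entirely routine. The reason the Bakry--Émery interpolation survives the continuous-to-discrete transition here is that it consists purely of algebraic manipulations of $\Gamma$ and $\Delta$ governed by the chain-rule identity (\ref{eq:local1}) and the algebraic definition of $\Gamma_2$, neither of which requires a diffusion operator.
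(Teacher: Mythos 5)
Your proposal is correct and follows essentially the same route as the paper's proof: the same two Bakry--\'Emery interpolations (gradient contraction from $\Gamma_2\ge K\Gamma$, and the reverse local Poincar\'e inequality via the local product identity), the same bound $c_K(s)\ge s$ for $s\le 1/(2|K|)$, and the same duality/Cauchy--Schwarz reduction of $\|\Delta P_s f\|_1$ --- you merely present the steps in the opposite order and use a generic test function $g$ with $\|g\|_\infty\le 1$ where the paper takes $\psi=\sgn(P_s\Delta f)$.
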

Note that the restriction on $t$ applies only when $K$ is negative: if
$K>0$ then $\Ric\ge K$ implies $\Ric\ge 0$ and the lemma holds with no
restriction on $t$.

\begin{proof}The proof is in three steps.
\medskip

\noindent
{\bf Step 1.} We first prove that
$$\Gamma(P_tf) \leq e^{-2K t} P_t( \Gamma(f))\,, $$
where the inequality holds pointwise on $V$ (recalling that these are real-valued functions on $V$). To that end, define the auxiliary function $g_s =
e^{-2K s} P_s ( \Gamma( P_{t-s} f))$, a function on $V$. It is enough to show that $\partial g_s / \partial s$ is pointwise
non-negative on $(0, t)$. We compute
$$ \frac{\partial g_s}{\partial s} =  e^{-2K s} P_s \big[   2\Gamma_2(P_{t-sf})  - 2K\Gamma(P_{t-s} f)  \big]\,.
 $$
Since $P_s$ preserves non-negativity, it is enough to prove that
$$\Gamma_2(P_{t-sf})  - K\Gamma(P_{t-s} f) \geq 0,  $$ which is true by our assumption, that  $\Ric(G) \geq K$.

\medskip
\noindent
{\bf Step 2.} Next we prove that
\begin{equation}
 P_t(f^2) - (P_t f)^2 \geq \left( \int_0^t 2 e^{2K s} ds \right)\Gamma(P_t f).
 \label{eq_331}
 \end{equation}
To that end, define the auxiliary function
$g_s = P_s [ (P_{t-s} f)^2 ]$. It is enough to show that $\partial g_s /
\partial s \geq 2 e^{2K s} \Gamma(P_t f)$, for any $0 \leq s \leq t$.  We compute, using
the local identity (\ref{eq:local1}) mentioned earlier,
$$ \frac{\partial g_s}{\partial s} = P_s \left[
2 P_{t-s} f \cdot \triangle P_{t-s} f + 2 \Gamma(P_{t-s} f) \right]
+ P_s \left[   2 P_{t-s} f \cdot (-\triangle P_{t-s} f) \right]. $$
Hence, by Step 1, for any $0 \leq s \leq t$,
$$ \frac{\partial g_s}{\partial s} = 2 P_s \left( \Gamma( P_{t-s} f) \right) \geq 2 e^{2K s} \Gamma( P_t f), $$
which gives (\ref{eq_331}).

Denote $c_K(t) = \int_0^t 2 e^{2K s} ds$. Then
$c_K(t) =  (e^{2K t}-1) /K$, for non-zero $K$, and $c_K(t) = 2t$
for $K = 0$. In both cases, $c_K(t) \approx 2 t$ for small $t > 0$.
For instance, $c_K(t) \geq t$ for $0 \leq t \leq 1/(2|K|)$. Hence
(\ref{eq_331}) gives, for $0 \leq t \leq 1/(2|K|)$,
\begin{equation}
 \max \sqrt {\Gamma( P_t f) } \leq \frac{1}{\sqrt{ t}} \max \sqrt{P_t(f^2)} \leq
\frac{1}{\sqrt{ t}} \max |f|. \label{eq_337}
\end{equation}

\medskip
\noindent
{\bf Step 3.}  As can be guessed by now,
we begin by writing
$$ P_t f - f = \int_0^t \frac{\partial P_s f}{\partial s} ds = \int_0^t P_s \triangle  f ds. $$
To prove the lemma, it suffices to show that $\|  P_s (\triangle f)
\|_1 \leq s^{-1/2} \| \sqrt{ \Gamma( f)} \|_1$ (since we have
$\int_0^t s^{-1/2} ds = 2 \sqrt{t}$). Let $\psi = \sgn(P_s(\Delta f))$.
Then,
\begin{eqnarray*}
\lefteqn{  \| P_s (\triangle f) \|_1 = \sum_{x \in V} P_s (\triangle f)(x) \cdot \psi =
\sum_{x\in V} \triangle f(x) \cdot P_s(\psi)(x)  = \sum_{x\in V} -\Gamma( f, P_s(\psi) )(x) } \\ & \leq & \sum_{x\in V} \sqrt{ \Gamma (f)(x) \cdot \Gamma( P_s(\psi))(x)}
\leq \|  \sqrt{ \Gamma( f) } \|_1 \cdot \max_{x\in V} \sqrt{\Gamma( P_s(\psi))(x)},
\end{eqnarray*}
and the desired inequality follows from (\ref{eq_337}), as $\max |\psi| = 1$.
\end{proof}

\subsection{Spectral gap and isoperimetry}
\label{iso-gap}

\begin{theorem}\label{thm:Buser1} Suppose $G$ has $\Ric(G) \ge K$, for some $K \in \R$.
Denote by $\lambda > 0$, the minimal
non-zero eigenvalue of $-\triangle$.
Then, for any subset $A \subset V$,
$$ |\partial A| \geq \frac{1}{2} \min \left \{ \sqrt{\lambda}, \frac{\lambda}{\sqrt{2|K|}} \right \}   |A| \left(1 - \frac{|A|}{|V|} \right) . $$
Here, by $\partial A$, we mean the collection of all edges
connecting $A$ to its complement.
\end{theorem}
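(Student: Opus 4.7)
My plan is to adapt Ledoux's continuous argument to the discrete setting by applying the gradient estimate in Lemma~\ref{lem1} to the indicator $f = \mathbbm{1}_A$, then matching the resulting inequality against a spectral-gap lower bound on the $L^1$ evolution of the heat semigroup. A direct calculation gives $\Gamma f(x) = e(x)/2$ where $e(x)$ is the number of edges joining $x$ to the opposite side of the cut $(A,A^c)$; since $e(x) \ge 1$ whenever $\Gamma f(x) > 0$, one has $\sqrt{e(x)/2} \le e(x)/\sqrt{2}$ and hence $\|\sqrt{\Gamma f}\|_1 \le \sqrt{2}\,|\partial A|$. Lemma~\ref{lem1} then produces, for every admissible $0 \le t \le 1/(2|K|)$,
\[
  \|f - P_t f\|_1 \le 2\sqrt{2t}\,|\partial A|.
\]

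For the matching lower bound I will use self-adjointness and mass conservation of $P_t$ to rewrite $\|f - P_t f\|_1 = 2\bigl(|A| - \langle \mathbbm{1}_A,P_t\mathbbm{1}_A\rangle\bigr)$. Decomposing $\mathbbm{1}_A = (|A|/|V|)\mathbbm{1} + g$ with $g \perp \mathbbm{1}$ gives $\|g\|_2^2 = |A|(1 - |A|/|V|)$; since every nonzero eigenvalue of $-\Delta$ is at least $\lambda$, the spectral decomposition yields $\langle g, P_t g\rangle \le e^{-\lambda t}\|g\|_2^2$. Consequently
\[
  \|f - P_t f\|_1 \ge 2(1 - e^{-\lambda t})\,|A|(1 - |A|/|V|).
\]
Combining the two displayed inequalities gives, for all admissible $t$,
\[
  |\partial A| \ge \frac{1 - e^{-\lambda t}}{\sqrt{2t}}\,|A|(1 - |A|/|V|).
\]

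It remains to optimize over $t \in (0,1/(2|K|)]$. When $\lambda \ge 2|K|$ (which is automatic for $K \ge 0$ by Theorem~\ref{thm:gap_curvature}), the choice $t = 1/\lambda$ is admissible and makes the right-hand side of order $\sqrt{\lambda}\,|A|(1-|A|/|V|)$. When $\lambda < 2|K|$, the constraint binds; taking $t = 1/(2|K|)$ and using $1 - e^{-\lambda t} \ge \lambda t/2$ (valid since $\lambda t \le 1$) produces a bound of order $\lambda/\sqrt{|K|}\,|A|(1-|A|/|V|)$. Taking the minimum of the two scalings yields exactly the form of the theorem.

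The main obstacle I anticipate is matching the clean constant $\tfrac{1}{2}$ in the statement: the direct argument above yields $(1-1/e)/\sqrt{2} \approx 0.45$ in the first regime and $1/4$ in the second (versus the stated $1/(2\sqrt{2}) \approx 0.35$), so either a slightly sharper bound on $\|\sqrt{\Gamma f}\|_1$ for indicators (perhaps via a mildly smoothed $P_s\mathbbm{1}_A$ in place of $\mathbbm{1}_A$, exploiting Step~1 of Lemma~\ref{lem1}) or a more careful choice of $t$ will be needed to recover the precise constant advertised in the statement.
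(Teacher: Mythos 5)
Your argument is the paper's argument: Lemma~\ref{lem1} applied to $f=\one_A$, the identity $\|\one_A-P_t\one_A\|_1=2\big(|A|-\|P_{t/2}\one_A\|_2^2\big)$, the spectral lower bound $2(1-e^{-\lambda t})|A|(1-|A|/|V|)$, and the same case split $t=1/\lambda$ versus $t=1/(2|K|)$. The one place you diverge is the constant, and your unease there points to a slip in the paper rather than a defect in your proof: the paper passes to $\|\one_A-P_t\one_A\|_1\le 2\sqrt{t}\,|\partial A|$, which amounts to $\|\sqrt{\Gamma(\one_A)}\|_1\le|\partial A|$; since $\Gamma(\one_A)(v)=e(v)/2$ with $e(v)$ the number of boundary edges at $v$ and $\sum_v e(v)=2|\partial A|$, the correct bound is your $\sum_v\sqrt{e(v)/2}\le\sqrt{2}\,|\partial A|$, and it is tight when the boundary edges form a matching. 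Consequently the argument honestly delivers $(1-1/e)/\sqrt{2}\approx 0.447$ in the first regime and $1/(2\sqrt{2})$ times $\lambda/\sqrt{2|K|}$ in the second, a factor $\sqrt{2}$ shy of the stated $1/2$; no smoothing of $\one_A$ will recover that in general, so the fix is to adjust the constant (or note the bound holds as stated when every vertex meeting $\partial A$ meets at least two of its edges). One minor correction to your write-up: for $K\ge 0$ the admissibility of $t=1/\lambda$ does not follow from Theorem~\ref{thm:gap_curvature} (which gives $\lambda\ge K$, not $\lambda\ge 2|K|$); it holds because Lemma~\ref{lem1} imposes no restriction on $t$ when $K\ge 0$.
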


As noted in the previous lemma, the term $\lambda /\sqrt{2|K|}$ is relevant only in the
case $K<0$.

\begin{proof}Apply the previous lemma to $f = \one_A$. Then $\Gamma(\one_A)$ is the function which associates with each $v \in V$, the
number of edges in $\partial A$ that are incident with $v$.
Consequently, for any $0 < t < 1/(2|K|)$,
$$ \| \one_A - P_t (\one_A) \|_1 \leq {2}\sqrt{ t} \cdot |\partial A|. $$
Note that $0 \leq P_t(\one_A) \leq 1$, hence the left-hand side may be written as follows:
$$ \| \one_A - P_t(\one_A) \|_1
  = \abs{A} - \sum_A P_t(\one_A) + \sum_{A^c} P_t(\one_A)
  = 2 \left[ \abs{A} - \sum_V \one_A \cdot P_t(\one_A) \right]
$$
Since $P_t$ is self-adjoint and $P_{t/2} P_{t/2} = P_t$, then,
$$  (1/2)\| \one_A - P_t(\one_A) \|_1  = \abs{A} - \| P_{t/2} (\one_A) \|_2^2
= \| \one_A \|_2^2 - \| P_{t/2} (\one_A) \|_2^2. $$

Let $\phi_i:1\leq i\leq n$ be the orthonormal eigenvectors of $\Delta$, and let $\lambda_i$ be the corresponding eigenvalues.  Let $\one_A = \sum a_i \vphi_i$ be the spectral decomposition
of $A$, with $\vphi_0 \equiv 1 / \sqrt{\abs{V}}$ and $a_0 = \abs{A} / \sqrt{\abs{V}}$.
Then
$P_{t/2} (\one_A) = \sum_i a_i e^{-\lambda_i t/2} \vphi_i$, and
hence
$$  (1/2) \| \one_A - P_t(\one_A) \|_1 = \sum_i (1 - e^{-\lambda_i t}) a_i^2
\geq (1 - e^{-\lambda t} ) \sum_{i \geq 1} a_i^2 = (1 - e^{-\lambda
t}) \left( \abs{A} - \frac{\abs{A}^2}{\abs{V}} \right). $$ To summarize,
for any $0 < t \leq 1/(2|K|)$,
$$ \abs{\partial A} \geq \frac{1 - e^{-\lambda t}}{ \sqrt{t}} \abs{A} \left(1 - \frac{\abs{A}}{\abs{V}} \right) . $$
If $\lambda \geq 2|K|$, we select $t = 1/\lambda \le 1/2|K|$, and deduce the theorem (use $(1 - 1/e)
> 1/2$). If $\lambda \leq 2|K|$, we take the maximal possible value,
$t = 1/(2|K|)$. Then $1 - e^{-\lambda /2|K|} \geq \lambda/(4|K|)$, and the
theorem follows.
\end{proof}


\begin{corollary}
Suppose  a graph $G$ has $\text{Ric}(G)\geq K$, for some  $K \ge 0$. Then
\[ h \ge \frac{1}{4}  \sqrt{\lambda}\,.
\]
\end{corollary}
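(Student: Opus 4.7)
The plan is to derive the corollary directly from Theorem \ref{thm:Buser1}, using that when $\Ric(G) \geq K \geq 0$, we automatically have $\Ric(G) \geq 0$, so we may invoke the theorem with curvature parameter $0$. In this regime the second term in the min disappears (formally, $\lambda/\sqrt{2|K|} = +\infty$ when $K = 0$), so the edge-boundary bound specializes to
\[
|\partial A| \geq \tfrac{1}{2}\sqrt{\lambda}\,|A|\left(1 - \frac{|A|}{|V|}\right)
\]
for every $A \subset V$.

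Next, I would restrict attention to sets $A$ with $0 < |A| \leq |V|/2$, which is the regime defining the Cheeger constant $h$. For such $A$ the factor $1 - |A|/|V|$ is at least $1/2$, so dividing the displayed inequality by $|A|$ yields
\[
\frac{|\partial A|}{|A|} \geq \tfrac{1}{2}\sqrt{\lambda} \cdot \tfrac{1}{2} = \tfrac{1}{4}\sqrt{\lambda}.
\]
Taking the minimum over all admissible $A$ gives $h \geq \tfrac{1}{4}\sqrt{\lambda}$, which is the claim.

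There is essentially no obstacle here, since all the work was done in Theorem \ref{thm:Buser1}; the only thing worth double-checking is that the constant really is $\tfrac{1}{4}$ and not something worse. The constant $\tfrac{1}{2}$ in Theorem \ref{thm:Buser1} for the nonnegative-curvature case comes from $1 - e^{-1} > \tfrac{1}{2}$ (setting $t = 1/\lambda$ in the proof), and the extra factor of $\tfrac{1}{2}$ comes from the Cheeger-type truncation $|A| \leq |V|/2$. Both are sharp enough for the stated bound, and no further appeal to Theorem \ref{thm:gap_curvature} or to the negative-curvature branch of Theorem \ref{thm:Buser1} is needed.
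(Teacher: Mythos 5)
Your proof is correct and follows essentially the same route as the paper: both apply Theorem~\ref{thm:Buser1} in the nonnegative-curvature case to get $|\partial A| \geq \tfrac{1}{2}\sqrt{\lambda}\,|A|(1-|A|/|V|)$ and then use $1-|A|/|V|\geq \tfrac12$ for $|A|\leq |V|/2$. (Incidentally, the paper's own proof cites Theorem~\ref{thm:gap_curvature} where it clearly means Theorem~\ref{thm:Buser1}; your reference is the right one.)
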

\begin{proof}
As already explained, when $K\ge 0$ we may ignore the term
$\lambda/\sqrt{2|K|}$ in the minimum in
Theorem~\ref{thm:gap_curvature} and then the theorem gives 
\begin{align*}
\frac{|\partial A|\cdot |V|}{|A|\cdot |\overline{A}|}\geq \frac12\sqrt{\lambda},\end{align*}
and so we have
\[h\geq \frac{1}{4}\sqrt{\lambda}\,.\qedhere\]
\end{proof}



\subsection{Logarithmic Sobolev constant and isoperimetry}
\label{iso-LS}
We now prove an analogue of Theorem~5.3 from \cite{Ledoux04spectralgap}, relating the log-Sobolev constant $\rho$ to an isoperimetric quantity.
Consider the hypercontractive formulation of the log-Sobolev constant (see e.g., \cite{Gross75},\cite{DSC96}):
namely, define $\rho$ to be the greatest value so that whenever $1<r<q<\infty$ and $\displaystyle\sqrt{\frac{q-1}{r-1}}\leq e^{\rho t}$, then $$n^{-1/q} \norm{P_tf}_q\leq n^{-1/r}\norm{f}_r\,.$$


\begin{theorem}
\label{thm:iso_LSI}
Suppose $G$ has $\Ric(G) \geq K$ for some value $K\in \R$. Then for any subset $A\subset V$ with $|A|\leq |V|/2 = n/2$,
$$|\partial A|\geq \frac{1}{16}\min\parens{\sqrt{\rho},\frac{\rho}{\sqrt{2\abs{K}}}} |A| \log\frac{n}{|A|}.$$
\end{theorem}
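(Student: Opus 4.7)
The plan is to mimic the proof of Theorem~\ref{thm:Buser1}, but with a hypercontractive bound replacing the spectral-gap input, exactly as in Ledoux's~\cite{Ledoux04spectralgap} derivation of a Gaussian-type isoperimetric inequality from the logarithmic Sobolev inequality.

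First, I would apply Lemma~\ref{lem1} to the indicator $f=\one_A$. Since $\Gamma(\one_A)(x)$ is half the number of edges from $x$ that cross $\partial A$, any nonzero value of $\Gamma(\one_A)(x)$ is at least $1/2$, so $\sqrt{\Gamma(\one_A)(x)}\le\sqrt{2}\,\Gamma(\one_A)(x)$ pointwise, and summing gives $\|\sqrt{\Gamma(\one_A)}\|_1\le\sqrt{2}\,|\partial A|$. Lemma~\ref{lem1} then yields, for $0\le t\le 1/(2|K|)$ (no constraint when $K\ge 0$),
\[
\|\one_A-P_t\one_A\|_1\;\le\;2\sqrt{2t}\,|\partial A|.
\]
Exactly as in the proof of Theorem~\ref{thm:Buser1}, the bounds $0\le P_t\one_A\le 1$ together with the self-adjointness of $P_{t/2}$ give $\tfrac12\|\one_A-P_t\one_A\|_1=|A|-\|P_{t/2}\one_A\|_2^2$.

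Second, in place of the spectral decomposition used there, I would invoke hypercontractivity from $L^r$ to $L^2$ with source exponent $r=1+e^{-\rho t}$, chosen so that $\sqrt{(2-1)/(r-1)}=e^{\rho t/2}$ satisfies the hypothesis. By the definition of $\rho$ this gives $n^{-1/2}\|P_{t/2}\one_A\|_2\le n^{-1/r}|A|^{1/r}$, which rearranges to $\|P_{t/2}\one_A\|_2^2\le|A|(|A|/n)^{\sigma(t)}$ with $\sigma(t):=\tanh(\rho t/2)$. Combining the preceding inequalities,
\[
|\partial A|\;\ge\;\frac{|A|\bigl(1-(|A|/n)^{\sigma(t)}\bigr)}{\sqrt{2t}}.
\]

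Third, I would optimize over $t\in(0,1/(2|K|)]$. Writing $L:=\log(n/|A|)\ge\log 2$, the elementary inequality $1-e^{-x}\ge\min(x/2,1/2)$ together with $\sigma(t)\asymp\rho t$ for small $t$ suggests taking $t=2/(\rho L)$ when this lies in the admissible range and $t=1/(2|K|)$ otherwise. At the first choice, $\sigma(t)L$ is of constant order so the numerator is bounded below by an absolute positive multiple of $|A|$ and the denominator is of order $(\rho L)^{-1/2}$; at the second, $\sigma(t)L$ is small and one uses the linear lower bound on $1-e^{-x}$. The two regimes correspond respectively to the two branches $\sqrt{\rho}$ and $\rho/\sqrt{2|K|}$ of the minimum appearing in the conclusion.

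The main obstacle is the final bookkeeping: one has to verify that the cutoff between the two choices of $t$ matches (up to absolute constants) the cutoff $\rho=2|K|$ in the minimum of the statement, and then to track constants through Lemma~\ref{lem1}, the hypercontractive bound, and the elementary $1-e^{-x}$ estimate carefully enough to recover the explicit prefactor~$1/16$.
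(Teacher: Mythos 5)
Your proposal follows essentially the same route as the paper's proof: apply Lemma~\ref{lem1} to $\one_A$, use the identity $\tfrac12\|\one_A-P_t\one_A\|_1=|A|-\|P_{t/2}\one_A\|_2^2$, replace the spectral decomposition of Theorem~\ref{thm:Buser1} by hypercontractivity from $L^r$ into $L^2$ with $r$ of the form $1+e^{-c\rho t}$, and then optimize over admissible $t$ in two regimes. The bookkeeping you defer is handled in the paper by setting $t_0=\min\big(1/(2|K|),1/\rho\big)$ and splitting on whether $|A|/n<e^{-4}$ (take $t=4t_0/\log(n/|A|)$, automatically admissible) or not (take $t=t_0$); note that what the paper's argument actually delivers is the bound with $\big(\log(n/|A|)\big)^{1/2}$, i.e.\ the $\sqrt{\rho L}$ scaling you obtain, rather than the first power of the logarithm appearing in the theorem statement.
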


\begin{proof}

As in the proof of the above Theorem~\ref{thm:Buser1}, we can observe
that
$$\sqrt{t}\frac{|\partial A|}{n}\geq \frac{|A|}{n}-\frac{\norm{P_{t/2}(\one_A)}_2^2}{n}\,,$$
if $0 < t < 1/(2\abs{K})$\,.
\noindent
Using the hypercontractivity property with $q =2$ and $r = 1 + e^{-2\rho t}$ gives that $$\frac{\norm{P_{t/2}(\one_A)}_2^2}{n} \leq \frac{\norm{\one_A}_r^2}{n^{2/r}} = \parens{\frac{|A|}{n}}^{2/r}\,.$$
Hence,
\begin{align*}
 \sqrt{t}\frac{|\partial A|}{n}
 \geq \frac{|A|}{n} - \frac{\norm{P_{t/2}(\one_A)}_2^2}{n}
 \geq \frac{|A|}{n}-\parens{\frac{|A|}{n}}^{2/r}.
\end{align*}
As $2/r \geq 1 + \rho t/4$, whenever $0\leq \rho t \leq 1$, and $|A|/n \leq 1$,
\begin{align}\label{tbound}
 \sqrt{t}\frac{|\partial A|}{n}
 \geq  \frac{|A|}{n}-\parens{\frac{|A|}{n}}^{1+\rho t/4}
 = \frac{|A|}{n}\parens{1-\parens{\frac{|A|}{n}}^{\rho t/4}}.
\end{align}
Let $t_0 = \min\parens{1/2\abs{K},1/\rho}$.  If $|A|/n < e^{-4}$, set $\displaystyle t = \frac{4t_0}{\log (n/|A|)}$.

Using this value of $t$ in (\ref{tbound}), we find
\begin{align*}
  \frac{\abs{\partial A}}{n}
    & \geq \frac{1}{\sqrt{t}}\frac{|A|}{n}(1-e^{-\rho t_0})\\
  & \geq \frac{1}{2\sqrt{t_0}}\frac{|A|}{n}(1-e^{-\rho t_0})
      \log\Big(\frac{n}{|A|}\Big)^{1/2}
    \geq \frac{1}{4} \rho \sqrt{t_0}\frac{|A|}{n}
      \parens{\log \frac{n}{|A|}}^{1/2}\,.
\end{align*}
On the other hand, if $e^{-4}\leq |A|/n\leq \frac{1}{2}$, use $t = t_0$ in (\ref{tbound}) to find:
\begin{align*}\frac{\abs{\partial A}}{n} \geq \frac{1}{\sqrt{t_0}}\frac{|A|}{n}\parens{1 - 2^{-\rho t_0/4}} \geq \frac{1}{8} \rho \sqrt{t_0}\,. \frac{|A|}{n}\geq \frac{1}{16} \rho \sqrt{t_0}\frac{|A|}{n}\parens{\log \frac{n}{|A|}}^{1/2}\,,\end{align*}
where, for the second inequality, we use $1-2^{-x} \ge x/2$, if $0\le x \le 1$. Hence,
\begin{align*}
\frac{\abs{\partial A}}{n}\geq \frac{1}{16} \rho\sqrt{\min\parens{\frac{1}{2|K|},\frac{1}{\rho}}}\frac{|A|}{n}\parens{\log\frac{n}{|A|}}^{1/2} \geq \frac{1}{16}\min{\parens{\sqrt{\rho},\frac{\rho}{\sqrt{2|K|}}}}\frac{|A|}{n}\parens{\log\frac{n}{|A|}}^{1/2}\,,
\end{align*}
proving the theorem.
\end{proof}

The optimality of the above theorem (in terms of the dependence on the parameters involved) remains open at this time; in particular, we do not have tight examples.
It is also natural to ask if the bound $\rho \geq K$ holds when $\Ric \geq K \geq 0$, similar to the bound on $\lambda$ in Theorem~\ref{thm:gap_curvature}.  In general this is not true, consider the complete graph on $n$ vertices.  We have seen that $\Ric = 1 + \tfrac{n}{2}$, and it is easy to see (by considering the characteristic function of a set as a test function) and is also well-known that $\rho = O(\tfrac{n}{\log n})$ (see e.g., \cite{MT06}).

It is however true that under a different notion of discrete curvature
for reversible Markov chains, one developed by Erbar and Maas, the
so-called {\em modified} logarithmic Sobolev constant, $\rho_0$, can
be lower bounded by the curvature, see  \cite{EM12}. Thus it is certainly interesting to explore whether an analog of Theorem~\ref{thm:gap_curvature} is true with $\rho_0$ in place of $\lambda$; recall here that $\rho_0$ captures the rate of decay of relative entropy of the Markov chain, relative to the equilibrium distribution, while $\rho$ captures the hypercontractivity property of the Markov kernel (see \cite{MT06} for additional information).

\subsection*{Acknowledgment} The last author thanks Matthias Erbar, Shayan
Oveis-Gharan, and Luca Trevisan for discussions on Buser inequality
for graphs. We thank the referee for bringing to our attention a lot of relevant literature we were not aware of, especially \cite{CY86,LY,CLY14,Munch15}.

\bibliographystyle{plain}
\bibliography{ricci}

\end{document}